\newcommand\dps{\displaystyle }
\newtheorem{prop}{Proposition}
\def\N{\mathbb{N}}
\def\R{\mathbb{R}}
\def\cL{\mathcal{L}}
\def\cC{\mathcal{C}}
\def\bfa{\textbf{a}}
\def\bfb{\textbf{b}}
\def\bfc{\textbf{c}}
\def\bfl{\textbf{l}}
\def\rt{\widetilde{r}}
\def\st{\widetilde{s}}
\def\dps{\displaystyle}
\title{A dynamical adaptive tensor method for the Vlasov-Poisson system}
\author{Virginie Ehrlacher\thanks{Universit\'e Paris Est, CERMICS, Projet Matherials, Ecole des Ponts ParisTech - INRIA, 6 \& 8 avenue Blaise Pascal, 77455 Marne-la-Vall\'ee Cedex 2, France, ({\tt ehrlachv@cermics.enpc.fr})}  \and Damiano Lombardi\thanks{INRIA Paris, ({\tt damiano.lombardi@inria.fr})}}
\begin{document}
\maketitle

\begin{abstract}
A numerical method is proposed to solve the full-Eulerian time-dependent Vlasov-Poisson system in high dimension. The algorithm relies on the construction of a tensor decomposition of 
the solution whose rank is adapted at each time step. This decomposition is obtained through the use of an efficient modified Progressive Generalized Decomposition (PGD) method, whose 
convergence is proved. We suggest in addition a symplectic time-discretization splitting scheme that preserves the Hamiltonian properties of the system. This scheme is naturally 
obtained by considering the tensor structure of the approximation. The efficiency of our approach is illustrated through time-dependent 2D-2D numerical examples.  
\end{abstract}


\section{Introduction}
The present work investigates a numerical method for the resolution of the time-dependent Vlasov-Poisson system. The solution is approximated using parsimonious tensor methods.

\medskip

In the litterature, equations arising in kinetic theory are solved by three classes of approaches: particle methods (Particle-In-Cell~\cite{germaschewski2016,brackbill2016,cazeaux2014multiscale}, 
Particle-In-Cloud~\cite{wang2016}), semi-lagrangian approaches~\cite{crouseilles2009,charles2013,kormann2015,pinto2008,crouseilles2010} and full-deterministic Eulerian methods~\cite{filbet2003,madaule2014,xu2010scalable}.
In this work, we focus on the Vlasov-Poisson system as a simple yet challenging example of kinetic equation. While Eulerian approaches are appealing to describe the evolution of the unknown quantities of interest,
the high dimensionality of the phase space domain make them often prohibitive in terms of memory and computational cost, especially when 2D-2D and 3D-3D problems are at hand.

The proposed method is not a particular discretization \emph{per se}, instead, it gives a way to build a parsimonius tensor decomposition starting from chosen a priori separated discretizations 
for the space and the velocity variables in a full eulerian approach. The contribution is twofold: first, we show that the use of a tensorised representation of the solution induces a 
natural splitting of the equations which respects the Hamiltonian nature of the Vlasov-Poisson equations; second, an efficient fixed-point algorithm is proposed 
to solve the (non-symmetric) equations using tensorised functions. This step is performed using a modified Proper Generalized Decomposition (PGD) 
method~\cite{chinesta2011,figueroa2012,chinesta2010,falco2012,falco2011,cances2011,le2009results}, and the convergence of the scheme is proved. 
Let us mention that close ideas were introduced in the recent work~\cite{cho2016} for the evolution of high-dimensional probability densities. In the contribution~\cite{kormann2015}, 
a tensor train method is used to discretize 
the Vlasov-Poisson equations by separating each component, in a semi-lagrangian approach. 

Here, we do not separate in all the variables in order to deal with generic space (and possibly velocity) domain geometries~\cite{xu2010scalable}. 
Thus, only second order tensors are used. The proposed method dynamically adapts through time the rank of the decomposition. This is an important feature, as was noted in~\cite{cho2016,kormann2015}, since 
the number of tensorised terms needed to approximate with a given tolerance the solution at a certain time is not known \emph{a priori}.

The structure of the work is as follows: in Section~\ref{sec:hamiltonian}, 
the Vlasov-Poisson system is recalled in its classical and Hamiltonian formulation. 
A discussion on how a tensorised representation leads to a natural splitting of the evolution is presented in Section~\ref{sec:splitting}.

A second-order symplectic scheme in time is derived for the tensor representation update. Then, in Section~\ref{sec:tensor}, after a brief review of the PGD method for the resolution of symmetric coercive problems, a fixed-point scheme is presented, 
to solve some non-symmetric linear problems arising in the Vlasov-Poisson context. The proof of convergence of the algorithm is presented in Appendix~A. 
Numerical tests illustrating the properties of the method are presented in Section~\ref{sec:numtest}.

\section{Hamiltonian formulation and tensor decomposition}\label{sec:hamiltonian}
In this section, the Hamiltonian formulation of the Vlasov-Poisson system is recalled. 
A particular emphasis is put on the elements that play an important role in the derivation of the proposed numerical method. 
The idea is to compute a tensor decomposition of the solution of the Vlasov-Poisson system and to use a symplectic integrator in time
in order to preserve the hamiltonian structure of the equations. As it will be shown in Section~\ref{sec:splitting}, the tensorised expansion induces a natural splitting of the equations. 

\subsection{The Vlasov-Poisson system}


Let $d\in\N^*$ denote the spatial dimension of the problem and $\Omega_x, \Omega_v \subseteq \R^d$. The Vlasov-Poisson system for negative electric charges reads:
\begin{eqnarray}
\label{eq:VlasovPoisson}
 \partial_t f + v\cdot\nabla_x f - E\cdot\nabla_v f = 0, & \quad \mbox{ in } (0,+\infty) \times \Omega_x \times \Omega_v,\nonumber \\
 -\Delta_x \varphi  = 1 - \int_{\Omega_v} f \ dv,&  \quad \mbox{ in } (0,+\infty) \times \Omega_x, \nonumber \\
E = -\nabla_x \varphi, & \quad  \mbox{ in } (0,+\infty) \times \Omega_x, \nonumber \\
f(0,x,v) = f_0(x,v), & \quad  \mbox{ in } \Omega_x \times \Omega_v,\\ \nonumber
\end{eqnarray} 
with appropriate boundary conditions on $\Omega_x \times \Omega_v$, where 
$$
f: \left\{ 
	    \begin{array}{ccc}
	    (0,+\infty)\times \Omega_x \times \Omega_v & \to & \R\\
	    (t,x,v) & \mapsto & f(t,x,v)\\ 
	    \end{array}\right.
$$ is the particle distribution function in the phase space, $f_0\geq 0$ is the initial particle distribution function, $E(t,x)$ the electric fied and $\varphi(t,x)$ the electric potential. 
The particle density $\rho(t,x)$ is given by $\rho(t,x) = \int_{\Omega_v} f(t,x,v)\,dv$, and hence, the equation for the electric potential reads $-\Delta_x \varphi= 1 - \rho$. 

\medskip

The global existence of positive (weak or strong) solutions has been studied in several works~\cite{arsenev1975,BardosDegond, Dolbeault, LionsPerthame, glassey1996,hwang2004, ambrosio2014}.

For instance, in~\cite{LionsPerthame}, 
when $\Omega_x = \Omega_v = \R^3$, the existence of a strong non-negative solution $f\in \cC(\R_+ ; L^1(\R^3\times \R^3)) \cap L^\infty (\R_+ \times \R^3 \times \R^3)$ is proved provided that the initial condition 
$f_0 \in L^1\cap L^\infty(\R^3 \times \R^3)$ satisfies the additional condition: for some $m_0 > 3$, 
$$
\int_{\R^3\times \R^3} f_0(x,v) \mid v  \mid^{m_0}\,dx\,dv < +\infty.
$$

\medskip

For numerical purposes, equation (\ref{eq:VlasovPoisson}) has to be solved on a truncated domain $\Omega = \Omega_x \times \Omega_v$. One can for instance impose periodic boundary conditions on $\partial \Omega_x$ 
or $\partial \Omega_v$, as done in~\cite{madaule2014}. An alternative formulation would consist of imposing homogeneous boundary conditions on $\Omega_v$, the velocity domain.

\medskip

Extensive reviews on the theory and numerical methods for this type of kinetic equations are detailed in~\cite{vedenyapin2011,degond2004}.

\subsection{Hamiltonian formulation}
The system (\ref{eq:VlasovPoisson}) may be derived by using an Hamiltonian formalism (see \cite{MARSDEN}).

The Hamiltonian for the Vlasov-Poisson system reads:
\begin{equation}
\mathcal{H} = \int_{\Omega} \frac{1}{2} f \mid v \mid^2 \ dx \ dv - \int_{\Omega_x} \frac{1}{2} \varphi \rho \ dx.
\end{equation}
The first term in the Hamiltonian is the kinetic energy of the particles, and the second term accounts for the electro-static energy. As commented in \cite{MARSDEN}, the Vlasov-Poisson equations can be derived by introducing a reduced Poisson bracket:
\begin{equation}
\left\lbrace a,b \right\rbrace := \nabla_x a \cdot \nabla_v b - \nabla_v a \cdot \nabla_x b.
\end{equation}

The evolution equation for the system can thus be written as:
\begin{equation}
\partial_t f = -\left\lbrace f,h \right\rbrace,
\label{eq:hamVlasov}
\end{equation}
where $h:=\frac{1}{2} \mid v \mid^2 - \varphi$ is the reduced Hamiltonian. A precise and detailed derivation of the Hamiltonian structure of the Vlasov-Poisson equations 
is found in \cite{MARSDEN, morrison2005}. 

\subsection{Splitting induced by tensor decomposition}\label{sec:splitting} 

For any measurable functions $r:\Omega_x \to \R$ and $s:\Omega_v \to \R$, we define the tensor product function $r\otimes s: \Omega_x \times \Omega_v \to \R$ as
$$
r\otimes s: \left\{
\begin{array}{ccc}
 \Omega_x \times \Omega_v & \to & \R\\
 (x,v) & \mapsto & r(x)s(v).\\
\end{array} \right.
$$
In the sequel, such a function is referred to as a {\itshape pure tensor-product} function. A linear combination of $n$ pure tensor-product functions (for some $n\in\N^*$) is called a {\itshape rank-}$n$ tensor product function.

\medskip

We also introduce here the notion of tensorized operators. Let $H_x$ (respectively $H_v$) be a Hilbert space of real-valued functions defined on $\Omega_x$ (respectively on $\Omega_v$) and $H$ a Hilbert space 
of functions defined on $\Omega_x \times \Omega_v$ so that $H_x \otimes H_v \subseteq H$.
An operator $A$ acting on functions depending on both $x$ and $v$ variables is a {\itshape tensorized operator} if it can be written as 
$$
A = \sum_{\lambda=1}^L A_x^{\lambda} \otimes A_v^{\lambda}, 
$$
for some $L\in\N^*$,  where for all $1\leq \lambda \leq L$, $A_x^{\lambda}$ (respectively $A_v^{\lambda}$) is an operator on $H_x$ (respectively on $H_v$). Let us remind the reader that for 
all operators $A_x$ on $H_x$, $A_v$ on $H_v$, and $(r,s)\in H_x \times H_v$, 
$$
\left( A_x \otimes A_v \right) (r\otimes s) = (A_x r) \otimes (A_v s).
$$

\medskip
In this section, a formal calculation is presented, which justifies how such a decomposition induces a natural splitting of the Vlasov-Poisson equations. 
Let us mention here the work~\cite{casas2015}, where a high-order splitting of the hamiltonian formulation for the Vlasov-Maxwell system was recently proposed.

In the present method, the aim is to approximate the function $f$, solution of (\ref{eq:VlasovPoisson}), 
by a separate variate expansion of the form:
\begin{equation}
f(x,v,t) \approx \sum_{k=1}^n r_k(x,t) s_k(v,t) = \sum_{k=1}^n r_k(\cdot,t)\otimes s_k(\cdot,t),
\end{equation}
with some measurable functions $r_k : \Omega_x \times \R_+ \to \R$, $s_k: \Omega_v  \times \R_+ \to \R$ and $n\in\N^*$. 
When this expression is inserted into the evolution equation written in a hamiltonian form, it reads:
\begin{equation}
\partial_t f = - \left\lbrace f,h \right\rbrace \approx \sum_{k=1}^n -\left\lbrace r_k , h \right\rbrace s_k  -r \left\lbrace s_k , h \right\rbrace.
\end{equation}
The Poisson bracket acting on $r_k$ and $s_k$, separately, can be interpreted as the operator which is inducing a dynamics on the functions $r_k$ and $s_k$. Indeed, 
when considering $\partial_t r_k = -\left\lbrace r_k , h \right\rbrace$ and $\partial_t s_k = -\left\lbrace s_k , h \right\rbrace$, the tensor decomposition implies 
naturally $\partial_t f = -\left\lbrace f , h \right\rbrace$. Consider a particular time $t=t^*$, for which $f(x,v,t^*) \approx \sum_{k=1}^n r_k(x,t^*) s_k(v,t^*)$. The action of the Poisson bracket on generic functions $r(x)$ and $s(v)$ depending respectively only upon the space coordinate $x\in \Omega_x$ or the velocity $v\in \Omega_v$ reads:
\begin{eqnarray}
\left\lbrace r, h\right\rbrace = v\cdot \nabla_x r(x), \\
\left\lbrace s, h\right\rbrace = \nabla_x \varphi \cdot \nabla_v s(v).
\end{eqnarray}
Two facts are fundamental: first, the evolution operator splits naturally into two parts, one acting on $r$ and the other on $s$. Second, the evolution of each part is the action of a tensorised operator acting on the functions.

\subsection{Symplectic integrator in time}\label{sec:scheme}

Let us define $\widetilde{r}_k: (t^*, T) \times \Omega_x \times \Omega_v \to \R$ and $\widetilde{s}_k: (t^*, T) \times \Omega_x \times \Omega_v \to \R$ as solutions to the dynamical system: 
\begin{eqnarray}
& \partial_t \rt_k(x,v,t^*) = - \{ \rt_k, h\}, \nonumber \\ 
& \partial_t \st_k(x,v,t^*) =  - \{ \st_k, h\}, \nonumber\\
& \rt_k(x,v,t^*) = r_k(x,t^*), \quad \st_k(x,v,t^*)  = s_k(v, t^*).
\label{eq:ev_rt_st}
\end{eqnarray}
At $t = t^*$, it holds that
\begin{eqnarray}
\partial_t \rt_k(x,v,t^*) = - v \cdot \nabla_x r_k(x,t^*), \nonumber \\ 
\partial_t \st_k(x,v,t^*) = - \nabla_x \varphi(x,t^*) \cdot \nabla_v s_k(v, t^*).
\label{eq:xv_var}
\end{eqnarray}
In other words, the time derivative of a generic element $\rt_k$ computed at time $t=t^*$ is given 
by the advection part of the Vlasov-Poisson system, whereas the time derivative of the element $\st_k$ 
is given by the electrostatic force. Remark that initial functions $r_k$ at time $t=t^*$ depend only upon $x$, 
but the time derivative depends of course also on $v$. The analogue is true for the $s_k$ functions. 

\medskip

A symplectic discretization in time for the system~(\ref{eq:hamVlasov}) is proposed, based on this remark. 
For a comprehensive overview of geometric integrators see \cite{lubich2006}. Let $\Delta t >0$ be a small time step. 
The starting point is to consider the system (\ref{eq:xv_var}) and use a St\"ormer-Verlet algorithm (see \cite{lubich2003}) 
to discretize the evolution of the functions $\rt_k$ and $\st_k$ between times $t^*$ and $t^* + \Delta t$. This scheme is obtained 
by considering $\rt_k$ and $\st_k$ as if they were the coordinates and the momenta of the Hamiltonian system associated to the Vlasov-Poisson equation. 
Define $\st_k^{\:(t^*)}(x,v) := \st_k(x, v, t^*) = s_k(v,t^*)$, $\rt_k^{\:(t^*)}(x,v) := \rt_k(x, v, t^*) =  r_k(x,t^*)$, and $\st_k^{\:(t^*+\Delta t/2)}$, $\rt_k^{\:(t^* + \Delta t)}$ and 
$\st_k^{\:(t^* + \Delta t)}$ as follows:
\begin{eqnarray}
\st_k^{\:(t^*+\Delta t/2)}(x,v) = \st_k^{\:(t^*)}(x,v) + \frac{\Delta t}{2} E^{\:(t^*)}(x)\cdot \nabla_v \st_k^{\:(t^* + \Delta t/2)}(x,v), \\
\rt_k^{\:(t^* + \Delta t)}(x,v) = \rt_k^{\:(t^*)}(x,v)  - \frac{\Delta t}{2} \left( v\cdot \nabla_x \rt_k^{\:(t^*)}(x,v) + v\cdot \nabla_x \st_k^{\:(t^* + \Delta t/2)}(x,v) \right), \\
\st_k^{\:(t^* + \Delta t)}(x,v) = \st_k^{\:(t^*+ \Delta t/2)}(x,v) + \frac{\Delta t}{2} E^{\:(t^*+2\Delta t/3)}(x) \cdot \nabla_v \st_k^{\:(t^* + \Delta t/2)}(x,v),
\end{eqnarray}
where the definitions of the electric fields are given below. 
Remember that $f(x,v,t^*) = \sum_{k=1}^n r_k(x,t)s_k(t,v)$. We define $E^{\:(t^*)}$ and $E^{\:(t^*+ 2\Delta t/3)}$ by
\begin{eqnarray}
\rho^{\:(t^*)}(x) := \int_{\Omega_v}f(x,v,t^*)\ dv, \nonumber \\
-\Delta_x \varphi^{\:(t^*)}(x) = 1 - \rho^{\:(t^*)}(x), \\
E^{\:(t^*)}(x) = -\nabla_x \varphi^{\:(t^*)}(x), \nonumber
\end{eqnarray}
and
\begin{eqnarray}
f^{\:(t^*+2\Delta t/3)}(x,v):= \sum_k^{n} \rt_k^{\:(t^*+\Delta t)}(x,v) \st_k^{\:(t^* + \Delta t/2)}(x,v), \nonumber\\
\rho^{\:(t^*+2\Delta t/3)}(x) := \int_{\Omega_v} f^{\:(t^*+2\Delta t/3)}(x,v)\ dv = \int_{\Omega_v} \sum_k^{n} \rt_k^{\:(t^*+\Delta t)} \st_k^{\:(t^* + \Delta t/2)} \ dv,\nonumber \\
-\Delta_x \varphi^{\:(t^*+2\Delta t/3)}(x) = 1 - \rho^{\:(t^*+2\Delta t/3)}(x), \\
E^{\:(t^*+2\Delta t/3)}(x) = -\nabla_x \varphi^{\:(t^*+2\Delta t/3)}(x). \nonumber
\end{eqnarray}

Defining 
\begin{eqnarray}
f^{\:(t^*)}(x,v):= \sum_k^{n} \rt_k^{\:(t^*)}(x,v) \st_k^{\:(t^*)}(x,v), \nonumber\\
f^{\:(t^*+\Delta t/3)}(x,v):= \sum_k^{n} \rt_k^{\:(t^*)}(x,v) \st_k^{\:(t^*+\Delta t/2)}(x,v), \nonumber\\
f^{\:(t^*+ 2\Delta t/3)}(x,v):= \sum_k^{n} \rt_k^{\:(t^*+\Delta t)}(x,v) \st_k^{\:(t^*+\Delta t/2)}(x,v), \nonumber\\
f^{\:(t^*+ \Delta t)}(x,v):= \sum_k^{n} \rt_k^{\:(t^*+\Delta t)}(x,v) \st_k^{\:(t^*+\Delta t)}(x,v), \nonumber\\
\end{eqnarray}
the above scheme can be rewritten as
\begin{eqnarray}
\left( I - \frac{\Delta t}{2} E^{\:(t^*)}\cdot \nabla_v \right) f^{\:(t^*+\Delta t/3)} = \left( I - \frac{\Delta t}{2} v\cdot \nabla_x \right) f^{\:(t^*)},\nonumber \\
\left( I + \frac{\Delta t}{2} v\cdot \nabla_x \right)f^{\:(t^*+ 2\Delta t/3)} =  f^{\:(t^*+\Delta t/3)}, \\
f^{\:(t^*+\Delta t)} = \left( I + \frac{\Delta t}{2} E^{\:(t^*+2\Delta t/3)}\cdot \nabla_v \right)f^{\:(t^*+ 2\Delta t/3)}. \nonumber
\end{eqnarray}

\medskip

This naturally leads us to define the following time-discretization scheme for the evolution of $f$. Set $f^{(0)}:= f_0$ and for all 
$m\in \N$, define
\begin{eqnarray}
\left( I - \frac{\Delta t}{2} E^{(m)}\cdot \nabla_v \right) f^{(m+1/3)} = \left( I - \frac{\Delta t}{2} v\cdot \nabla_x \right) f^{(m)},\nonumber \\
\left( I + \frac{\Delta t}{2} v\cdot \nabla_x \right)f^{(m+ 2/3)} =  f^{(m+1/3)}, \label{eq:scheme} \\ 
f^{(m+1)} = \left( I + \frac{\Delta t}{2} E^{(m+2/3)}\cdot \nabla_v \right)f^{(m+ 2/3)}. \nonumber \\ \nonumber \nonumber 
\end{eqnarray}

The function $f^{(m)}$ then gives an approximation of the solution $f$ to (\ref{eq:VlasovPoisson}) at time $t_m := m\Delta t$. 

\medskip

Of course, the starting point of the derivation of this scheme was to postulate that the function $f$ can be written in a separate variate expansion at some time $t^*$. 
In the next section, we present the algorithm which is used at each substep of the time-discretization scheme in order to obtain a tensorized approximation 
of the functions $f^{(m+1/3)}$, $f^{(m+2/3)}$ and $f^{(m+1)}$ for all $m\in\N$, assuming that $f^{(0)}$ is given in a tensorized form.

\section{Tensor methods}\label{sec:tensor}

In this section, let $H$, $H_x$ and $H_v$ be some arbitrary Hilbert spaces so that $H_x \otimes H_v \subseteq H$. A tensor-based method is introduced to solve the following problem: find $f\in H$ solution of
\begin{equation}\label{eq:genpbm}
(I + \Delta t P)f = g,
\end{equation}
where
\begin{itemize}
 \item $g$ is a finite-rank tensor product element of $H$; 
 \item $\Delta t\geq 0$ is a small constant;
 \item $I$ is an operator on $H$ of the form $I = I_x\otimes I_v$ where $I_x$ (respectively $I_v$) is a symmetric continous coercive operator on $H_x$ (respectively $H_v$);
 \item $P$ is an arbitrary \itshape tensorized \normalfont operator on $H$ (not necessarily symmetric).
\end{itemize}


In the Vlasov-Poisson context, each step of the proposed time-discretization scheme~(\ref{eq:scheme}) can be written 
under the form~(\ref{eq:genpbm}). Remark that the methodology presented hereafter can be directly applied to other time-discretization schemes and other contexts provided that 
they only require the resolution of elementary subproblems of the form (\ref{eq:genpbm}).  

\medskip

The approach relies on the so-called Proper Generalised Decomposition (PGD) method~\cite{ladeveze2010latin,chinesta2010,chinesta2011,falco2011,nouy2010priori}, and we first review
well-known results about this method in Section~\ref{sec:PGDsym}. We stress on 
the properties of this method on an important particular case in Section~\ref{sec:POD}. The scheme we propose is presented in Section~\ref{sec:PGDFP} 
along with convergence results whose proofs are postponed to the appendix.

\medskip

Let us highlight the philosophy of the method: the solution $f \in H$ of (\ref{eq:genpbm}) is approximated as a sum of tensor products
$$
f \approx \sum_{k=1}^n r_k \otimes s_k, 
$$
where for all $1\leq k \leq n$, $r_k \in H_x$ and $s_k \in H_v$. Each pair $(r_k, s_k)$ appearing in the above sum is computed in an iterative way so that the tensor product $r_k \otimes s_k$ is \itshape the best \normalfont possible 
tensor product at iteration $k$ of the algorithm. The meaning of this sentence will be made clear in the rest of the section. In the Vlasov-Poisson context, 
the advantage of this approach is that it only requires the resolution of linear problems for functions depending only on $x$ or only on $v$. Thus, the sizes of the linear 
problems that are solved are much smaller than the one of the full linear problems defining the iterations of the scheme (\ref{eq:scheme}). We comment further on this point in Section~\ref{sec:ALS}.

\subsection{PGD for coercive symmetric problems}\label{sec:PGDsym}

The PGD method is related to the so-called greedy algorithms~\cite{temlyakov2008greedy,le2009results} in nonlinear approximation theory. We review here well-known results on 
PGD algorithms for the approximation of high-dimensional coercive symmetric problems. We refer the reader to~\cite{falco2012, cances2011,figueroa2012} for more details. 

\medskip

Let $\bfa: H\times H \to \R$ a symmetric coercive continuous bilinear form on $H\times H$ and $\bfb:H\to \R$ a continuous linear form on $H$. 
Let $f\in H$ the unique solution of the linear problem
\begin{equation}\label{eq:sym}
\forall g\in H, \quad \bfa(f,g) = \bfb(g).
\end{equation}
The existence and uniqueness of a solution $f$ to problem (\ref{eq:sym}) is a consequence of the Lax-Milgram lemma. Besides, $f$ is equivalently the unique solution of the minimization problem
$$
f \in \mathop{\mbox{\rm argmin}}_{g\in H} \mathcal{E}(g), 
$$
were 
$$
\forall g \in H, \quad \mathcal{E}(g):= \frac{1}{2}\bfa(g,g) -  \bfb(g).
$$

Let us assume that the two Hilbert spaces $H_x$ and $H_v$  satisfy the following assumptions:
\begin{itemize}
 \item[(H1)] $\mbox{\rm Span}\left\{ r\otimes s, \; r\in H_x, \; s\in H_v \right\} \subset H$ and the inclusion is dense in $H$; 
 \item[(H2)] $\Sigma:=\left\{ r\otimes s, \; r\in H_x, \; s\in H_v \right\}$ is weakly closed in $H$.  
\end{itemize}

\medskip

Before presenting the PGD algorithm, we give here two simple examples of Hilbert spaces that satisfy these assumptions and are interesting in the Vlasov-Poisson context. 
\begin{enumerate}
 \item When $H = H^1_0(\Omega_x \times \Omega_v)$, the spaces $H_x = H^1_0(\Omega_x)$ and $H_v = H^1_0(\Omega_v)$ satisfy assumptions (H1)-(H2)~\cite{cances2011}.
 \item In a discretized setting, when $H = \R^{N_x \times N_v}$ for some $N_x,N_v\in \N^*$, the choice $H_x = \R^{N_x}$ and $H_v = \R^{N_v}$ ensures that (H1)-(H2) holds.  
\end{enumerate}

\vspace{1cm}

Let $g_0\in H$ be a given vector (which is usually chosen as $g_0 = 0)$. The PGD algorithm to compute an approximation of $f$ starting from the initial guess $g_0$ reads as follows:

\medskip

 \fbox{
\begin{minipage}{0.9\textwidth}
    {\bfseries PGD algorithm: }
 \begin{itemize}
  \item {\bfseries Initialization:} Set $n:=0$ and $f_0 := g_0$. 
  \item {\bfseries Iterate on $n\geq 0$:} Compute $(r_{n+1}, s_{n+1})\in H_x \times H_v$ as a solution of the minimization problem
  \begin{equation}\label{eq:PGDit}
  (r_{n+1}, s_{n+1}) \in \mathop{\mbox{argmin}}_{(r,s)\in H_x\times H_v} \mathcal{E}_n(r\otimes s),
  \end{equation}
  where for all $g\in H$, $\mathcal{E}_n(g) = \mathcal{E}(f_n + g) = \frac{1}{2}\bfa(f_n +g, f_n+g) - \bfb(f_n+g)$. 
  
  Define $f_{n+1} := f_n + r_{n+1} \otimes s_{n+1}$ and set $n = n+1$. 
 \end{itemize}
\end{minipage}
}

\vspace{1cm}

The choice of a stopping criterion is an important issue and we comment it later in this article. The method used in practice to solve (\ref{eq:PGDit}) is detailed in Section~\ref{sec:ALS}. 

\medskip

The following convergence result holds: 
\begin{prop}
Assume that the spaces $H,H_x,H_v$ satisfy assumptions (H1)-(H2). Then, all the iterations of the PGD algorithm are well-defined in the sense that there exists
at least one solution to problem~(\ref{eq:PGDit}) for all $n\in\N$. Besides, the sequence $(f_n)_{n\in\N}$ strongly converges 
in $H$ to $f$. 
\end{prop}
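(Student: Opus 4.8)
The plan is to follow the classical strategy for proving convergence of greedy (PGD) algorithms for symmetric coercive problems, reducing everything to the energy functional $\mathcal{E}$. First I would observe that since $\bfa$ is coercive and continuous, $\mathcal{E}$ is bounded below and the norm $\|g\|_\bfa := \bfa(g,g)^{1/2}$ is equivalent to the $H$-norm; moreover $\mathcal{E}(g) = \frac12\|g - f\|_\bfa^2 - \frac12\|f\|_\bfa^2$, so minimizing $\mathcal{E}$ is the same as minimizing the distance to $f$ in the $\bfa$-norm.

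\textbf{Step 1: Well-posedness of each iteration.} For fixed $f_n$, the map $g \mapsto \mathcal{E}_n(g) = \mathcal{E}(f_n + g)$ is continuous, coercive on $H$, and strictly convex, hence weakly lower semicontinuous. Restricted to the set $\Sigma$ of pure tensor products, a minimizing sequence $(r^{(j)}\otimes s^{(j)})_j$ is bounded in $H$ (by coercivity of $\mathcal{E}_n$), so up to extraction it converges weakly to some $z\in H$; by assumption (H2), $\Sigma$ is weakly closed, so $z\in\Sigma$, i.e. $z = r_{n+1}\otimes s_{n+1}$ for some $(r_{n+1},s_{n+1})\in H_x\times H_v$; by weak lower semicontinuity $z$ is a minimizer. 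This gives existence of a solution to~(\ref{eq:PGDit}) for every $n$, so the algorithm is well-defined.

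\textbf{Step 2: Monotonicity and a telescoping estimate.} Since $0\in\Sigma$, the choice $r\otimes s = 0$ is admissible in~(\ref{eq:PGDit}), hence $\mathcal{E}(f_{n+1}) = \mathcal{E}_n(r_{n+1}\otimes s_{n+1}) \le \mathcal{E}_n(0) = \mathcal{E}(f_n)$, so $(\mathcal{E}(f_n))_n$ is nonincreasing and bounded below, hence convergent. Writing $\mathcal{E}(f_{n+1}) = \mathcal{E}(f_n) + \bfa(f_n - f, r_{n+1}\otimes s_{n+1}) + \frac12\|r_{n+1}\otimes s_{n+1}\|_\bfa^2$ and using that $r_{n+1}\otimes s_{n+1}$ minimizes a quadratic over the cone $\Sigma$ (so in particular the scalar $t\mapsto \mathcal{E}_n(t\, r_{n+1}\otimes s_{n+1})$ is minimized at $t=1$), one extracts $\|f_{n+1}-f_n\|_\bfa^2 = \|r_{n+1}\otimes s_{n+1}\|_\bfa^2 = \mathcal{E}(f_n)-\mathcal{E}(f_{n+1}) \to 0$, and also that $(f_n)$ is bounded in $H$. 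So $(f_n)$ is a bounded sequence with $\|f_{n+1}-f_n\|\to 0$.

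\textbf{Step 3: Identifying the limit.} The key point — and the main obstacle — is to show that any weak limit of $(f_n)$ equals $f$, equivalently that $f_n - f \to 0$. The Euler--Lagrange condition for~(\ref{eq:PGDit}): for all $(r,s)\in H_x\times H_v$, $\bfa(f_n - f + r_{n+1}\otimes s_{n+1},\, r_{n+1}\otimes s + r\otimes s_{n+1}) = 0$, combined with $\|r_{n+1}\otimes s_{n+1}\|_\bfa\to 0$, yields that the residual $f_n - f$ becomes asymptotically $\bfa$-orthogonal to all of $\Sigma$. Since $f_n$ is bounded, pass to a weakly convergent subsequence $f_{n_k}\rightharpoonup f_\infty$; using $\|f_{n+1}-f_n\|\to0$ to handle the index shift, one shows $\bfa(f_\infty - f, r\otimes s) = 0$ for all $(r,s)$, hence by density (H1) $f_\infty = f$. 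Because every weak subsequential limit is $f$ and $(f_n)$ is bounded, $f_n\rightharpoonup f$. Finally, strong convergence follows from convergence of energies: $\mathcal{E}(f_n)$ converges, and one shows the limit must be $\mathcal{E}(f)=\inf\mathcal{E}$ (the decrement $\mathcal{E}(f_n)-\mathcal{E}(f_{n+1})$ controls how far $f_n$ is from stationarity; combined with weak convergence to $f$ one gets $\mathcal{E}(f_n)\to\mathcal{E}(f)$), and then $\frac12\|f_n - f\|_\bfa^2 = \mathcal{E}(f_n) - \mathcal{E}(f) \to 0$. Equivalence of norms then gives strong convergence in $H$. The delicate part throughout is handling the fact that $\Sigma$ is a nonconvex cone, not a subspace, so the Euler--Lagrange relation is a variational inequality-type statement and the orthogonality argument must be pushed through a subsequence with care; this is exactly where (H2) (weak closedness of $\Sigma$) is used.
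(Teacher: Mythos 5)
First, note that the paper does not actually prove this proposition: it defers entirely to the cited references (\cite{le2009results,cances2011,falco2012,figueroa2012}). Your overall route is the standard one from that literature, and Steps 1 and 2 are essentially sound: existence of a minimizer over $\Sigma$ via coercivity, weak lower semicontinuity of the convex functional $\mathcal{E}_n$, and weak closedness (H2); monotone decrease of the energy; square-summability of $\|r_{n+1}\otimes s_{n+1}\|_{\bfa}$ (your identity is off by a factor: optimizing over the cone gives $\bfa(f_n-f,\,r_{n+1}\otimes s_{n+1})=-\|r_{n+1}\otimes s_{n+1}\|_{\bfa}^2$, hence $\mathcal{E}(f_n)-\mathcal{E}(f_{n+1})=\tfrac12\|r_{n+1}\otimes s_{n+1}\|_{\bfa}^2$, which changes nothing qualitatively). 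The identification of the weak limit is also correct in substance, although it rests on the global maximization property $\|r_{n+1}\otimes s_{n+1}\|_{\bfa}=\max_{(r,s)}\bfa(f-f_n,r\otimes s)/\|r\otimes s\|_{\bfa}$ (optimality over all of $\Sigma$), not merely on the local Euler--Lagrange stationarity condition you invoke.

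The genuine gap is the final step, from weak to strong convergence. In the quadratic setting $\mathcal{E}(f_n)-\mathcal{E}(f)=\tfrac12\|f_n-f\|_{\bfa}^2$, so proving $\mathcal{E}(f_n)\to\mathcal{E}(f)$ \emph{is} proving strong convergence; there is no independent route through it. Your proposed justification does not close this: weak convergence $f_n\rightharpoonup f$ plus weak lower semicontinuity only yields $\mathcal{E}(f)\le\lim_n\mathcal{E}(f_n)$, which holds trivially since $f$ is the global minimizer --- the needed inequality is the reverse one. Likewise, the vanishing of the decrements only shows that the residual $f-f_n$ tends to $0$ in the injective norm $\|\cdot\|_*$, and, as the paper itself emphasizes in Section~3.2, $\|\cdot\|_*\le\|\cdot\|_H$ but the two norms are \emph{not} equivalent, so this does not give $\|f-f_n\|_H\to 0$. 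Closing this gap is the actual substance of the theorem; it requires the Jones/DeVore--Temlyakov-type argument for the pure greedy algorithm (or the variants in \cite{cances2011,le2009results}), which exploits the full sequence of residuals and the telescoping structure $f-f_n=(f-f_0)-\sum_{k\le n}r_k\otimes s_k$ in a way that goes well beyond what you have written. (The paper's own Appendix~A proof of the companion Proposition~2 also stops short here and defers the weak-to-strong upgrade to \cite{cances2011}.)
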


\medskip

We refer the reader to~\cite{le2009results,cances2011,falco2012,figueroa2012} for more details on the method and for the proof of this result. No further assumption is required at this stage on $\bfa$ 
or $\bfb$ for the convergence to hold. 
We will see in Section~\ref{sec:ALS} that the efficiency of a PGD-based method in practice depends on the tensor decomposition of $\bfa$ and $\bfb$. 

\subsection{An important particular case}\label{sec:POD}

A remarkable situation occurs when $H = H_x\otimes H_v$ and $\bfa(\cdot, \cdot) = \langle \cdot, \cdot \rangle_H$. 
In this case, it holds that 
\begin{equation}\label{eq:normproduct}
 \forall (r,s)\in H_x \times H_v, \quad \|r\otimes s\|_H = \|r\|_{H_x}\|s\|_{H_v}. 
\end{equation}

Let $\epsilon>0$ be a small positive constant which characterizes the stopping criterion. 

\medskip

\fbox{
\begin{minipage}{0.9\textwidth}
    {\bfseries PGD-$\epsilon$ algorithm: }
 \begin{itemize}
  \item {\bfseries Initialization:} Set $n=0$ and $f_0 = g_0$. 
  \item {\bfseries Iterate on $n\geq 0$:} Compute $(r_{n+1}, s_{n+1})\in H_x \times H_v$ as a solution of the minimization problem
  \begin{equation}\label{eq:PGDit2}
  (r_{n+1}, s_{n+1}) \in \mathop{\mbox{argmin}}_{(r,s)\in H_x\times H_v} \mathcal{E}_n(r\otimes s),
  \end{equation}
  where forall $g\in H$, $\mathcal{E}_n(g) = \frac{1}{2}\|f_n + g\|_H^2 - \bfb(g)$.
  Define $f_{n+1} := f_n + r_{n+1} \otimes s_{n+1}$. 
  
  If $\|r_{n+1} \otimes s_{n+1}\|_H < \epsilon$, then stop and define $PGD(b,g_0,\epsilon):= f_{n+1}$. Otherwise, set $n = n+1$ and iterate again. 
 \end{itemize}
\end{minipage}
}

\vspace{1cm}

Denoting by $b$ the Riesz representative of the linear form $\bfb$ for the scalar product $\langle \cdot, \cdot \rangle_H$, it holds that for all $n\in\N^*$, $(r_{n+1}, s_{n+1})$ is equivalently the solution of 
  \begin{equation}\label{eq:PGDit2bis}
  (r_{n+1}, s_{n+1}) \in \mathop{\mbox{argmin}}_{(r,s)\in H_x\times H_v} \| b - f_n - r\otimes s\|_H^2.
  \end{equation}
Using the norm-product property (\ref{eq:normproduct}), it can be proved~\cite{le2009results} that if $g_0=0$, the 
above algorithm gives an iterative method to compute the Proper 
Orthogonal Decomposition (POD) of the Riesz representative $b$ of the linear form $\bfb$ for the scalar product $\langle \cdot, \cdot \rangle_H$. A consequence is that 
an approximate solution $f_n = \sum_{k=1}^n r_k \otimes s_k$ computed after $n$ iterations of the PGD algorithm 
is a best $n$-rank approximation of $b$. In other words, 
$$
\| b - f_n \|_H = \min_{ (\widetilde{r}_k,\widetilde{s}_k)_{1\leq k \leq n } \in (H_x \times H_v)^n} \left\| b - \sum_{k=1}^n \widetilde{r}_k \otimes \widetilde{s}_k\right\|_H.
$$
This optimality property is particularly interesting in the present case. In the rest of the article, we shall denote by $POD(b, \epsilon) = PGD(b ,0, \epsilon)$.

\medskip

Another interesting consequence is that the sequence $\left( \|r_n\otimes s_n\|_H \right)_{n\in \N^*}$ of the norms of the tensor product functions given by the PGD algorithm is non-increasing. Indeed, 
this sequence is identical to the set of the singular values of the POD of $b$ in $H_x\otimes H_v$ in non-increasing order~\cite{le2009results}.  

\medskip

Let us comment here on the use of this particular stopping criterion, which is the one we use in practice in the Vlasov-Poisson context. It holds from (\ref{eq:PGDit2}) (or equivalently (\ref{eq:PGDit2bis})) that 
$$
\|r_{n+1}\otimes s_{n+1}\|_H = \max_{(r,s)\in H_x \times H_v} \frac{\langle b-f_n, r\otimes s\rangle_H}{\|r\otimes s\|_H}.
$$
For any element $g\in H$, let us define
$$
\|g\|_*:= \mathop{\sup}_{(r,s)\ni H_x \times H_v} \frac{\langle g, r\otimes s\rangle_H}{\|r\otimes s\|_H} =  \mathop{\max}_{(r,s)\ni H_x \times H_v} \frac{\langle g, r\otimes s\rangle_H}{\|r\otimes s\|_H}  .
$$

The application $g\in H \mapsto \|g\|_*\in \R_+$ defines a norm on $H$ which is called the injective norm~\cite{grothendieck1996resume}. This norm is equal to the maximal singular value of the POD decomposition of 
$g$ in $H_x\otimes H_v=H$. Of course, we have $\|g\|_* \leq \|g\|_H$ but these two norms are not equivalent. Thus, $\|r_{n+1}\otimes s_{n+1}\|_H$ can be seen as the injective norm of the residual of the decomposition 
$b-f_n$. We use the injective norm in practice as a stopping criterion because the latter 
quantity is much faster to evaluate than the $H$-norm. 

\subsection{Fixed-point PGD algorithm for weakly non-symmetric problems}\label{sec:PGDFP}

Assume now that $f$ is the solution of a problem of the form
\begin{equation}\label{eq:nonsym}
\forall g\in H, \quad \langle f,g \rangle_H + \widetilde{\bfa}(f,g) = \bfb(g),
\end{equation}
where $\bfb$ is a continuous linear form on $H$ and $\widetilde{\bfa}: H\times H \to \R$ is a continuous bilinear form which is not symmetric nor coercive in general. There exists a unique solution of this problem 
for instance when $\| \widetilde{\bfa}\|_{\cL(H\times H; \R)} < 1$.

\medskip

We still assume that we start from an initial guess for $f$ given by an element $g_0 \in H$. A natural idea to solve (\ref{eq:nonsym}) when $\widetilde{\bfa}$ is 
a \itshape small \normalfont perturbation of the identity operator on $H$ is to consider the following fixed-point PGD algorithm: 

\medskip

\fbox{
\begin{minipage}{0.9\textwidth}
    {\bfseries Fixed-point PGD algorithm: }
 \begin{itemize}
  \item {\bfseries Initialization:} Set $n=0$ and $f_0 = g_0$. 
  \item {\bfseries Iterate on $n\geq 0$:} Compute $(r_{n+1}, s_{n+1})\in H_x \times H_v$ as a solution of the minimization problem
  \begin{equation}\label{eq:PGDit3}
  (r_{n+1}, s_{n+1}) \in \mathop{\mbox{argmin}}_{(r,s)\in H_x\times H_v} \mathcal{E}_n(r\otimes s),
  \end{equation}
  where for all $g\in H$, $\mathcal{E}_n(g) = \frac{1}{2}\langle f_n +g, f_n+g \rangle_H - \bfb(f_n+g) - \widetilde{\bfa}(f_n, f_n+g)$. 
  
  Define $f_{n+1} := f_n + r_{n+1} \otimes s_{n+1}$ and set $n = n+1$. 
 \end{itemize}

\end{minipage}
}

\vspace{1cm}

This algorithm was already suggested and studied in~\cite{cances2013}. 
Its convergence was then proved under the condition that the Hilbert spaces $H_x$ and $H_v$ are finite-dimensional and that $\|\widetilde{\bfa}\|_{\cL(H\times H)} \leq \kappa$ where $\kappa$ was 
some constant depending on the dimension of the spaces which goes to $0$ as the dimension of the spaces go to infinity. This theoretical convergence result was much more pessimistic than the numerical observations. 
Indeed, it was already pointed out in~\cite{cances2013} that numerical tests indicated that this constant $\kappa$ should not depend on the dimension of the spaces. 

\medskip

In this article, we prove that $\kappa$ does not need to depend on the dimension of the Hilbert spaces, but on the number of terms appearing in the tensor decomposition of $\widetilde{\bfa}$.
More precisely, let $\widetilde{A} \in \cL(H; H)$ be the continuous linear operator on $H$ associated to $\widetilde{\bfa}$, i.e. such that 
$$
\forall g_1,g_2\in H, \quad \widetilde{\bfa}(g_1, g_2) = \langle \widetilde{A} g_1, g_2 \rangle_H.
$$
Then, the following result holds: 

\begin{prop}\label{prop:FPPGD}
All the iterations of the Fixed-point PGD algorithm are well-defined, in the sense that for all $n\in\N$, there exists at least one solution to (\ref{eq:PGDit}). 
Moreover, let us assume that $\widetilde{A} = \sum_{\mu=1}^M \widetilde{A}_x^\mu \otimes \widetilde{A}_v^\mu$ where for all $1\leq \mu \leq M$, $\widetilde{A}_x^\mu \in \cL(H_x;H_x)$ and $\widetilde{A}_v^\mu \in \cL(H_v;H_v)$. 
 Let $\kappa:= \max_{1\leq \mu \leq M} \left\| \widetilde{A}_x^\mu \otimes \widetilde{A}_v^\mu\right\|_{\cL(H; H)}$. Assume that at least one of these two assumptions is satisfied: 
 \begin{itemize}
  \item [(A1)] $H = H_x\otimes H_v$ (thus the norm of $H$ satisfies the norm-product property (\ref{eq:normproduct})) and $3M\kappa < 1$; 
  \item [(A2)] $5M\kappa < 1$. 
 \end{itemize}
Then, there is a unique solution $f$ to (\ref{eq:nonsym}) and the sequence $(f_n)_{n\in\N}$ strongly converges in $H$ to~$f$.
\end{prop}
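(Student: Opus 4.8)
The plan is to recast each step of the Fixed-point PGD algorithm as a best rank-one approximation of a residual and then to monitor the $H$-norm of that residual. Let $b\in H$ be the Riesz representative of $\bfb$ and $B := I + \widetilde{A}\in\cL(H;H)$, so that (\ref{eq:nonsym}) reads $Bf = b$; since $\|\widetilde{A}\|_{\cL(H)}\le \sum_{\mu=1}^M\|\widetilde{A}_x^\mu\otimes\widetilde{A}_v^\mu\|_{\cL(H)}\le M\kappa<1$ under either (A1) or (A2), $B$ is boundedly invertible and $f := B^{-1}b$ is the unique solution of (\ref{eq:nonsym}). Expanding the functional, one finds $\mathcal{E}_n(r\otimes s) = \tfrac12\|z_n - r\otimes s\|_H^2 + C_n$ with $z_n := b - Bf_n$ and $C_n$ independent of $(r,s)$, so that the minimization problem (\ref{eq:PGDit3}) is exactly the computation of a best rank-one approximation $u_{n+1}:= r_{n+1}\otimes s_{n+1}$ of $z_n$, and $f_{n+1} = f_n + u_{n+1}$, $z_{n+1} = z_n - Bu_{n+1}$. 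Existence of a minimizer (hence well-posedness of all iterations) is obtained as in the symmetric case, from weak lower semicontinuity and coercivity of $u\mapsto\|z_n-u\|_H^2$ and the weak closedness of $\Sigma$ (assumption (H2)). I would first record two elementary consequences of optimality, valid in an arbitrary $H$: since $\Sigma$ is a cone and $0\in\Sigma$, differentiating $t\mapsto\|z_n-tu_{n+1}\|_H^2$ at $t=1$ gives $\langle z_n-u_{n+1},u_{n+1}\rangle_H=0$, hence the Pythagorean identity $\|z_n-u_{n+1}\|_H^2=\|z_n\|_H^2-\|u_{n+1}\|_H^2$; and comparing with the one-parameter competitors $tv$, $v\in\Sigma$, $\|v\|_H=1$, yields $\|u_{n+1}\|_H=\|z_n\|_*$.

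The heart of the argument is a decrease estimate for $\|z_n\|_H$. From $z_{n+1}=(z_n-u_{n+1})-\widetilde{A}u_{n+1}$ and the Pythagorean identity,
\[
\|z_n\|_H^2-\|z_{n+1}\|_H^2=\|u_{n+1}\|_H^2+2\langle z_n-u_{n+1},\widetilde{A}u_{n+1}\rangle_H-\|\widetilde{A}u_{n+1}\|_H^2 .
\]
Here the tensor structure of $\widetilde{A}$ is crucial: writing $\widetilde{A}u_{n+1}=\sum_{\mu=1}^M w_\mu$ with $w_\mu:=(\widetilde{A}_x^\mu\otimes\widetilde{A}_v^\mu)u_{n+1}\in\Sigma$ and $\|w_\mu\|_H\le\kappa\|u_{n+1}\|_H$, one gets $\|\widetilde{A}u_{n+1}\|_H\le M\kappa\|u_{n+1}\|_H$, and it remains to estimate each $|\langle z_n-u_{n+1},w_\mu\rangle_H|$. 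Under (A2) I would simply bound $|\langle z_n-u_{n+1},w_\mu\rangle_H|\le|\langle z_n,w_\mu\rangle_H|+|\langle u_{n+1},w_\mu\rangle_H|\le(\|z_n\|_*+\|u_{n+1}\|_H)\|w_\mu\|_H=2\|u_{n+1}\|_H\|w_\mu\|_H\le 2\kappa\|u_{n+1}\|_H^2$, using $w_\mu\in\Sigma$ and $\|u_{n+1}\|_H=\|z_n\|_*$. Under (A1) the norm-product property lets one use the singular value decomposition of $z_n$ in $H_x\otimes H_v$: since $z_n-u_{n+1}$ carries only singular values $\le\|z_n\|_*$, a weighted Cauchy--Schwarz estimate gives the sharper bound $|\langle z_n-u_{n+1},w_\mu\rangle_H|\le\kappa\|u_{n+1}\|_H^2$. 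Summing over $\mu$ and substituting, $\|z_{n+1}\|_H^2\le\|z_n\|_H^2-\gamma\|u_{n+1}\|_H^2$ with $\gamma:=1-cM\kappa-M^2\kappa^2$, $c=2$ under (A1), $c=4$ under (A2); since $M^2\kappa^2\le M\kappa$, the hypothesis $3M\kappa<1$ (resp.\ $5M\kappa<1$) gives $\gamma>0$, which explains the two thresholds.

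From the decrease estimate, $(\|z_n\|_H)_n$ is non-increasing and hence convergent, and $\gamma\sum_{n\ge1}\|u_n\|_H^2\le\|z_0\|_H^2<\infty$, so $\|u_n\|_H\to0$, i.e.\ $\|z_{n-1}\|_*=\|u_n\|_H\to0$. Then, for every $r\otimes s\in\Sigma$, $|\langle z_n,r\otimes s\rangle_H|\le\|z_n\|_*\|r\otimes s\|_H\to0$; since $\mathrm{Span}\,\Sigma$ is dense in $H$ (assumption (H1)) and $(z_n)$ is bounded, it follows that $z_n\rightharpoonup0$ weakly in $H$, whence $f_n=B^{-1}(b-z_n)\rightharpoonup B^{-1}b=f$ weakly.

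The main obstacle is the last step: upgrading this to strong convergence, equivalently proving $\|z_n\|_H\to0$. This does \emph{not} follow from $\|z_n\|_*\to0$ and weak convergence, because the injective norm $\|\cdot\|_*$ and $\|\cdot\|_H$ are not equivalent (see Section~\ref{sec:POD}). To close this gap I would argue along the lines of the convergence proof of the pure greedy algorithm: supposing $\|z_n\|_H^2$ decreases to some $\ell>0$, one combines the square-summability $\sum_n\|u_n\|_H^2<\infty$ with the fact that $u_{n+1}$ captures the \emph{dominant} rank-one content of $z_n$ (so that a genuinely spread-out residual cannot persist while only square-summable amounts are removed) and with the density of $\mathrm{Span}\,\Sigma$ and the finite-rankness of $z_0$, through a careful telescoping of the scalar products $\langle z_n,z_m\rangle_H$ for $m>n$, to derive $\ell=0$. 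Once $\|z_n\|_H\to0$, the bound $\|f-f_n\|_H=\|B^{-1}z_n\|_H\le\|B^{-1}\|_{\cL(H)}\|z_n\|_H$ gives $f_n\to f$ strongly, completing the proof. (In the finite-dimensional setting relevant for the Vlasov--Poisson application this last step is immediate, weak and strong convergence coinciding; the whole point of the statement is that the thresholds on $M\kappa$ are dimension-independent.)
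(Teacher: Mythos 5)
Your proposal is correct and follows essentially the same route as the paper's own proof: the identical residual decrease estimate $\|R_n\|_H^2-\|R_{n+1}\|_H^2\geq (1-3M\kappa)\|r_{n+1}\otimes s_{n+1}\|_H^2$ (resp. $(1-5M\kappa)\|r_{n+1}\otimes s_{n+1}\|_H^2$), derived from the orthogonality relation, the identity $\|r_{n+1}\otimes s_{n+1}\|_H=\|R_n\|_*$, and, under (A1), the bound of the injective norm of $R_n-r_{n+1}\otimes s_{n+1}$ by $\|r_{n+1}\otimes s_{n+1}\|_H$. As in the paper, you then obtain square-summability of the increments and weak convergence to the unique solution, and defer the upgrade to strong convergence to the standard greedy-algorithm argument (the paper handles this last step by invoking \cite{cances2011}).
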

The proof of Proposition~\ref{prop:FPPGD} is given in the appendix. Let us point out that the convergence of the proposed algorithm is not covered in the work~\cite{hackbusch2008}, where 
the authors also treat approximation of equations using tensor methods and fixed-point iterations, but 
with a different point of view.

\medskip

In the Vlasov-Poisson context, a similar stopping criterion is used, as the one we described in Section~\ref{sec:POD}. More precisely, for $\epsilon>0$, $g_0\in H$, we consider the following algorithm: 

\medskip

\fbox{
\begin{minipage}{0.9\textwidth}
    {\bfseries Fixed-point PGD-$\epsilon$ algorithm: }
 \begin{itemize}
  \item {\bfseries Initialization:} Set $n=0$ and $f_0 = g_0$. 
  \item {\bfseries Iterate on $n\geq 0$:} Compute $(r_{n+1}, s_{n+1})\in H_x \times H_v$ as a solution of the minimization problem
  \begin{equation}\label{eq:PGDit3bis}
  (r_{n+1}, s_{n+1}) \in \mathop{\mbox{argmin}}_{(r,s)\in H_x\times H_v} \mathcal{E}_n(r\otimes s),
  \end{equation}
  where for all $g\in H$, $\mathcal{E}_n(g) = \frac{1}{2}\langle f_n +g, f_n+g \rangle_H - \bfb(f_n+g) - \widetilde{\bfa}(f_n, f_n+g)$. 
  
  Define $f_{n+1} := f_n + r_{n+1} \otimes s_{n+1}$.

  If $\|r_{n+1} \otimes s_{n+1}\|_H < \epsilon$, then stop and define $PGD_{FP}(\widetilde{A}, b,g_0,\epsilon):= f_{n+1}$. Otherwise, set $n = n+1$ and iterate again. 
 \end{itemize}
\end{minipage}
}

\vspace{1cm}

Let $b\in H$ denote the Riesz representative of $\bfb$ in $H$. For all $n\in\N$, $(r_{n+1},s_{n+1})\in H_x \times H_v$ is a solution to (\ref{eq:PGDit3}) if and only if it is a solution to 
\begin{equation}\label{eq:alter}
(r_{n+1}, s_{n+1}) \in \mathop{\mbox{argmin}}_{(r,s)\in H_x \times H_v} \| b - (I + \widetilde{A})f_n - r\otimes s\|_H^2,
\end{equation}
where $I$ denotes the identity operator on $H$. The stopping criterion used above is justified by the fact that, for all $n\in\N$, $\|r_{n+1}\otimes s_{n+1}\|_H$ 
is equal to the injective norm of the residual of the equation $R_n:= b - (I + \widetilde{A})f_n$. Indeed, (\ref{eq:alter}) implies that 
$$
\|r_{n+1}\otimes s_{n+1}\|_H = \mathop{\max}_{(r,s)\in H_x \times H_v} \frac{\langle R_n, r\otimes s\rangle_H}{\|r\otimes s\|_H} = \|R_n\|_*.
$$

\subsection{Alternating least squares (ALS) for the practical resolution of the PGD iterations}~\label{sec:ALS}

We present in this section how minimization problems (\ref{eq:PGDit}), (\ref{eq:PGDit2}), (\ref{eq:PGDit3}) and (\ref{eq:PGDit3bis}) are solved in practice. Let us point out that in all cases, at iteration $n\in\N$, $(r_{n+1},s_{n+1})\in H_x \times H_v$ 
is defined as a solution to 
\begin{equation}\label{eq:min2solve}
(r_{n+1}, s_{n+1}) \in \mathop{\mbox{argmin}}_{(r,s)\in H_x \times H_v} \mathcal{E}_n(r\otimes s), 
\end{equation}
where for all $g\in H$, $\mathcal{E}_n(g) = \frac{1}{2}\bfc(g,g) - \bfl(g)$, for some continuous linear form $\bfl: H \to \R$ and coercive bilinear continuous form $\bfc: H \times H \to \R$, wich depend on $n$.

\medskip

Problem (\ref{eq:min2solve}) is solved in practice using the Alternating Least Squares (ALS)~\cite{espig2015, uschmajew2012local,rohwedder2013local} algorithm which is standard in tensor-based approximation methods. 
For a given error tolerance $\eta>0$, the algorithm reads as follows:

\medskip

\fbox{
\begin{minipage}{0.9\textwidth}
    {\bfseries ALS-$\epsilon$ algorithm: }
 \begin{itemize}
  \item {\bfseries Initialization:} Set $m=0$ and choose randomly $r^0\in H_x$ and $s^0 \in H_v$. 
  \item {\bfseries Iterate on $m\geq 0$:} Compute $r^{m+1}\in H_x$ as the unique solution of
  \begin{equation}\label{eq:rm}
  r^{m+1}\in \mathop{\mbox{argmin}}_{r\in H_x} \mathcal{E}_n(r\otimes s^m).
  \end{equation}
  Then, compute $s^{m+1}\in H_v$ as the unique solution of 
  \begin{equation}\label{eq:sm}
  s^{m+1} \in \mathop{\mbox{argmin}}_{(s\in H_v} \mathcal{E}_n(r^{m+1}\otimes s).
  \end{equation}

  If $\|r^{m+1}\otimes s^{m+1} - r^m\otimes s^m\|_H < \eta$, set $r_{n+1} = r^{m+1}$ and $s_{n+1} = s^{m+1}$. 
  
  Otherwise, set $m:= m+1$ and iterate again. 
 \end{itemize}
\end{minipage}
}

\medskip

The convergence properties of this ALS algorithm are analyzed in details in~\cite{espig2015} in the case when $H_x$ and $H_v$ are finite-dimensional, and for more sophisticated tensor formats. The algorithm can be shown to converge to a solution 
of the Euler equations associated to (\ref{eq:min2solve}). The limit tensor product is not theoretically ensured to be the global minimum (or even a local minimum) of $(r,s)\in H_x \times H_v \mapsto \mathcal{E}_n(r\otimes s)$. 

However, in practice, one can observe that it usually converges in a few iterations to a local minimum of (\ref{eq:min2solve}). 
It is very commonly observed that this choice leads to very satisfactory convergence rates of PGD methods. Hence, we also use it here in the Vlasov-Poisson context.

\medskip

In the rest of the article, we shall denote by $PGD(b,g_0,\epsilon,\eta)$ (respectively $PGD_{FP}(\widetilde{A}, b, g_0, \epsilon, \eta)$) the functions obtained by 
the PGD-$\epsilon$ (respectively Fixed-point PGD-$\epsilon$) method when 
minimization problem (\ref{eq:PGDit2bis}) (respectively (\ref{eq:PGDit3bis})) is solved using an ALS-$\eta$ algorithm. We also denote by $POD(b, \epsilon, \eta):= PGD(b, 0, \epsilon, \eta)$.

\medskip

We stress here on a crucial point: for the ALS algorithm to be numerically efficient in a high-dimensional context, it is important that the forms $\bfc$ and $\bfl$ 
admits a finite-rank tensor decomposition. Indeed, let us assume that $\bfc = \sum_{\gamma = 1}^C \bfc_x^\gamma \otimes \bfc^\gamma_v$ and $\bfl = \sum_{\delta = 1}^D \bfl_x^\delta \otimes \bfl_y^\delta$ for some $C,D\in \N^*$, 
such that for all $1\leq \gamma \leq C$, $\bfa_x^\gamma \in \cL(H_x \times H_x; \R)$, $\bfa_v^\gamma \in \cL(H_v\times H_v; \R)$, and for all $1\leq \delta \leq D$, $\bfl_x^\delta \in \cL(H_x; \R)$, $\bfl_v^\delta \in \cL(H_v; \R)$. 

\medskip

At each iteration $m\in\N^*$ of the ALS-$\eta$ algorithm, $r^{m+1}\in H_x$ (respectively $s^{m+1}\in H_v$) is the unique solution to (\ref{eq:rm}) (respectively (\ref{eq:sm})) if and only if it is the solution of 
the first-order Euler equations
\begin{eqnarray*}
\forall r\in H_x, \quad  \bfc(r^{m+1}\otimes s^m , r\otimes s^m) = \bfl(r\otimes s^m),\\
 \forall s\in H_v, \quad  \bfc(r^{m+1}\otimes s^{m+1} , r^{m+1}\otimes s) = \bfl(r^{m+1}\otimes s).\\ 
\end{eqnarray*}
We clearly see that the computation of $r^{m+1}$ and $s^{m+1}$ only requires the resolution of a linear symmetric coercive system for functions depending only on $x$, or only on $v$. 
The size of the associated discretized 
problems are thus much smaller than those that one would have obtained to solve (\ref{eq:nonsym}) directly for instance. 

Using the tensor decomposition of $\bfc$ and $\bfl$, these equations can be rewritten as
\begin{eqnarray}
\forall r\in H_x, \quad  \sum_{\gamma=1}^C \bfc^\gamma_v(s^m , s^m)\bfc_x^\gamma(r^{m+1}, r) = \sum_{\delta=1}^D\bfl_v^\delta(s^m)\bfl_x^\delta(r),\\ \label{eq:eqr}
 \forall s\in H_v, \quad \sum_{\gamma=1}^C \bfc^\gamma_v(s^{m+1} , s)\bfc_x^\gamma(r^{m+1}, r^{m+1}) = \sum_{\delta=1}^D\bfl_v^\delta(s)\bfl_x^\delta(r^{m+1}).\\ \label{eq:eqs} \nonumber
\end{eqnarray}
The tensorized decomposition of $\bfc$ and $\bfl$ implies that each term appearing in (\ref{eq:eqr}) and (\ref{eq:eqs}) can be quickly evaluated, since they only involve forms defined on Hilbert spaces of functions depending 
on only one variable. Such tensorized decompositions are always naturally available in the Vlasov-Poisson context, and this crucial fact is at the heart of the efficiency of this approach.

\section{Final algorithm for the Vlasov-Poisson system}

\subsection{Space and velocity discretization}\label{sec:vxdisc}

Let us highlight that the proposed method can be adapted to various types of space and velocity discretizations, as well as different time schemes. 
It can also be adapted in other contexts than the Vlasov system.
Let us assume that a discretization with $N_x$ (respectively $N_v$) degrees of freedom in the $x$ variable (respectively the $v$ variable) is used. Thus, at each time step of the 
discretization scheme, the approximation of $f$ is characterized 
by a matrix $f \in \R^{N_x\times N_v}$ which is computed in a separated form as
$$
f  = \sum_{k=1}^n r_k\otimes s_k = \sum_{k=1}^n r_k s_k^T, 
$$
with some vectors $r_k \in \R^{N_x\times 1}$ and $s_k \in \R^{N_v\times 1}$. 

\medskip

In this setting, the Hilbert spaces $H_x$, $H_v$ and $H$ are chosen to be $\R^{N_x}$, $\R^{N_v}$ and $\R^{N_x \times N_v}$. The only requirement for this strategy to be applicable is that 
each step of the chosen scheme requires the resolution of problems of the form (\ref{eq:genpbm}), where $P$ is a tensorized operator at the discrete level, and $g$ a finite-rank element of $H$. More precisely, 
we assume that at each step of the algorithm $P$ and $g$ can be respectively written as 
$$
P = \sum_{k=1}^p P_x^k \otimes P_v^k \mbox{ and } g = \sum_{k=1}^q g_x^k \otimes g_v^k,
$$
for some matrices $P_x^k \in \R^{N_x \times N_x}$, $P_v^k \in \R^{N_v \times N_v}$ and vectors $g_x^k\in\R^{N_x}$, $g_v^k\in\R^{N_v}$. 
Also, the operator $I$ appearing in equation~(\ref{eq:genpbm}) can be written as $I = I_x \otimes I_v$ for some symmetric positive matrices $I_x\in \R^{N_x \times N_x}$ and $I_v \in \R^{N_v \times N_v}$. 
The space $H$ is endowed with the scalar product
$$
\forall f,g\in H, \quad \langle f,g \rangle_H:= \mbox{\rm Tr}\left( f^T I_x g I_v \right) = \mbox{\rm Tr}\left( f^T (  I_x \otimes I_v g) \right).
$$

\medskip

The idea is illustrated using a finite element discretization. Let us introduce $\left( \phi_i(x) \right)_{1\leq i \leq N_x}$ and $\left( \psi_k(v)\right)_{1\leq k \leq N_v}$ some finite element 
discretization bases of functions defined on $\Omega_x$ and $\Omega_v$ respectively. Assume that these functions belong respectively to $H^1(\Omega_x)$ and $H^1(\Omega_v)$ with appropriate 
boundary conditions. 

The following matrices are defined: for all $1\leq \alpha \leq d$ (recall that $d$ is the dimension of the problem), and any measurable bounded field 
$E = (E_{\alpha})_{1\leq \alpha \leq d}: \Omega_x \to \R^d$, 
\begin{eqnarray*}
I_x &:= \left( \int_{\Omega_x} \phi_i(x) \phi_j(x)\,dx \right)_{1\leq i,j \leq N_x}, \\
F_x (E_\alpha) & := \left( \int_{\Omega_x}  \phi_i(x) E_\alpha(x) \phi_j(x)\,dx \right)_{1\leq i,j \leq N_x}, \\
D_{\alpha,x} & := \left( \int_{\Omega_x}  \phi_i(x) \partial_{x_\alpha} \phi_j(x)\,dx \right)_{1\leq i,j \leq N_x}, \\
I_v & := \left( \int_{\Omega_v} \psi_k(v) \psi_l(v)\,dv \right)_{1\leq k,l \leq N_v}, \\
V_{\alpha,v}& := \left( \int_{\Omega_v}  \psi_k(v) v_\alpha \psi_l(v) \,dv \right)_{1\leq k,l \leq N_v}, \\
D_{\alpha,v} & := \left( \int_{\Omega_v}  \psi_k(v) \partial_{v_\alpha} \psi_l(v)\,dv \right)_{1\leq k,l \leq N_v}.\\
\end{eqnarray*}

The time scheme introduced in Section~\ref{sec:scheme} is recalled:
\begin{eqnarray}
\left( I + \frac{\Delta t}{2} E^{(m)}\cdot \nabla_v \right) f^{(m+1/3)} = \left( I - \frac{\Delta t}{2} v\cdot \nabla_x \right) f^{(m)},\nonumber \\
\left( I + \frac{\Delta t}{2} v\cdot \nabla_x \right)f^{(m+ 2/3)} =  f^{(m+1/3)}, \\
f^{(m+1)} = \left( I - \frac{\Delta t}{2} E^{(m+2/3)}\cdot \nabla_v \right)f^{(m+ 2/3)}. \nonumber
\end{eqnarray}

The discretized version of this scheme then reads as follows: let $f^{(0)} \in \R^{N_x \times N_v}$. For all $m\in \N$, compute $f^{(m+1/3)},f^{(m+2/3)},f^{(m+1)}\in \R^{N_x \times N_v}$ solutions of
\begin{eqnarray}
\left( I_x \otimes I_v + \frac{\Delta t}{2} \sum_{\alpha=1}^d F_x(E_{\alpha}^{(m)})\otimes D_{\alpha,v} \right) f^{(m+1/3)} = \left( I_x \otimes I_v - \frac{\Delta t}{2} V_{\alpha,v} \otimes D_{\alpha,x} \right) f^{(m)},\nonumber \\
\left( I_x \otimes I_v + \frac{\Delta t}{2} \sum_{\alpha=1}^d D_{\alpha, x} \otimes V_{\alpha, v} \right)f^{(m+ 2/3)} =  f^{(m+1/3)}, \\
f^{(m+1)} = \left( I_x \otimes I_v - \frac{\Delta t}{2}\sum_{\alpha=1}^d F_x(E_{\alpha}^{(m+2/3)})\otimes D_{\alpha,v} \right)f^{(m+ 2/3)}. \nonumber
\end{eqnarray}

\subsection{Summary of the algorithm in the discretized setting}

The method we propose for the resolution of the Vlasov-Poisson system is summarized hereafter. Let $\epsilon>0$ be a chosen tolerance threshold.

\medskip

\fbox{
\begin{minipage}{0.9\textwidth}
    {\bfseries Verlet-PGD-$\epsilon$ algorithm: }
 \begin{itemize}
  \item {\bfseries Initialization:} Set $f^{(0)} = f_0$. 
  \item {\bfseries Iterate on $m\geq 0$:} 
  \begin{itemize}
   \item 
   Define $P^{(m+1/3)} = \frac{\Delta t}{2} \sum_{\alpha=1}^d F_x(E_\alpha^{(m)})\otimes D_{\alpha,v}$ and $g^{(m+1/3)} = \left( I_x \otimes I_v - \frac{\Delta t}{2} V_{\alpha,v} \otimes D_{\alpha,x} \right) f^{(m)}$. 
   Compute $\overline{f}^{(m+1/3)}$ as 
   $$
   \overline{f}^{(m+1/3)} = PGD_{FP}(P^{(m+1/3)},g^{(m+1/3)}, f^{(m)}, \epsilon, \epsilon).
   $$
   Recompress $\overline{f}^{(m+1/3)}$ by computing 
   $$
   f^{(m+1/3)} = POD\left(\overline{f}^{(m+1/3)}, \epsilon, \epsilon\right). 
   $$
   \item Define $P^{(m+2/3)} =  \frac{\Delta t}{2} \sum_{\alpha=1}^d D_{\alpha, x} \otimes V_{\alpha, v} $. 
   Compute $\overline{f}^{(m+2/3)}$ as 
   $$
   \overline{f}^{(m+2/3)} = PGD_{FP}(P^{(m+1/3)},f^{(m+1/3)}, f^{(m+1/3)}, \epsilon, \epsilon).
   $$
   Recompress $\overline{f}^{(m+2/3)}$ by computing 
   $$
   f^{(m+2/3)} = POD\left(\overline{f}^{(m+2/3)}, \epsilon, \epsilon\right). 
   $$
   \item Define $Q^{(m+1)} = - \frac{\Delta t}{2}\sum_{\alpha=1}^d F_x(E_{\alpha}^{(m+2/3)})\otimes D_{\alpha,v}$. Compute $f^{(m+1)}$ as 
   $$
   f^{(m+1)} = POD((I_x\otimes I_v + Q^{(m+1)})f^{(m+2/3)}, \epsilon, \epsilon).
   $$
  \end{itemize}
 \end{itemize}
\end{minipage}

}

\vspace{1cm}

The condition we obtained in Proposition~\ref{prop:FPPGD} on the convergence of the Fixed-point PGD algorithm implies that the time step $\Delta t>0$ has to be taken sufficiently small 
to ensure that the norms of the operators entering in the decomposition of $P^{m+1/3}$ and $P^{m+2/3}$ are also small. In practice, we thus observe that our scheme suffers from a type of CFL condition that 
has to be respected for the method to converge. Apart from this restriction which does not appear to be too penalizing in practice, the approach proposed here is very flexible and yields promising numerical results 
as shown in the next section.

\section{Numerical results}~\label{sec:numtest}
In this section some numerical experiments are presented, to assess the properties of the method. 
First, two 1D-1D examples are considered, to validate the proposed approach. The following quantities are monitored: the error in mass, momentum and energy conservation, 
and the error with respect to a reference solution. The $L_2$ averaged in time relative errors are defined as follows:
\begin{eqnarray}
\epsilon_m := \frac{1}{M\, t_f } \left( \int_0^{t_f} ( m - m(0) )^2 \ dt \right)^{1/2}, \\
\epsilon_p := \frac{1}{P\, t_f} \left(  \int_0^{t_f} (p - p(0) )^2 \ dt \right)^{1/2}, \\
\epsilon_h := \frac{1}{H(0)\, t_f} \left( \int_0^{t_f} (h-h(0))^2) \ dt \right)^{1/2}, \\
\epsilon_f := \frac{1}{ t_f} \left( \int_0^{t_f} \frac{\int_{\Omega}  (f_{ref}-f)^2 \ dx \ dv}{ \int_{\Omega}  f_{ref}^2 \ dx \ dv}  \ dt \right)^{1/2},  
\end{eqnarray}
where $t_f$ is the final time of the simulation, $M$ is the normalising mass factor, defined as the mass of the initial condition 
$M = m(0) = \int_{\Omega} f_0 \ dx \ dv $, $P = \sqrt{2 M K}$ is the momentum reference value, where $K = \int_{\Omega} f_0 \frac{v^2}{2} \ dx \ dv $ is the initial 
kinetic energy, and $H(0)$ is the Hamiltonian at initial time.

In the last part of this section, a 2D-2D example is shown to illustrate the applicability of the method in more high-dimensional settings. Simulations on 3D-3D testcases is work in progress. 

\subsection{Landau Damping.}
\label{subsec:LandauDamping}
The first test proposed is a standard linear Landau damping in a 1D-1D configuration, as proposed in~\cite{madaule2014}.
The domain size is $\Omega_x = [0,4\pi]$ and $\Omega_v = [-10,10]$. Periodic (respectively homogeneous Dirichlet) boundary conditions are set on $\Omega_x$ (respectively $\Omega_v$).
The initial condition is given in analytical form as:
\begin{eqnarray}
f(x,v; t=0) = F(x)G(v), \\
F(x) = 1 + \beta \cos(k x), \\
G(v) = \frac{1}{\sqrt{2\pi}} \exp \left( - \frac{v^2}{2} \right),
\end{eqnarray}
where $k = 0.5$ is the wavenumber of the perturbation and the amplitude $\beta=0.01$ set the problem in a linear Landau damping regime (see~\cite{madaule2014,kormann2015}). In such a 
configuration the analytical decay rate for the electric amplitude is $\gamma \approx 0.153$. For this test a mixed discretization is set up: 
for the space, a spectral collocation method is used based on a Fourier discretization, whereas for the velocity standard centered finite differences are used.

The numerical experiments are done by varying the space and velocity resolution, the time step, and the tolerance on the residual. For the space and the velocity discretization, we take 
$N_x=N_v = (32,64,128,256)$. The final time is set to $t_f=10.0$ and the number of iterations is $N_t = (4 \cdot 10^3, 8 \cdot 10^3, 16 \cdot 10^3)$. The tolerance on the 
residual is chosen as $\varepsilon = (10^{-10}, 10^{-12},  10^{-14}, 10^{-16})$. For the reference simulation $N_x=N_v = 512$, $N_t = 32 \cdot 10^3$ and $\varepsilon = 10^{-18}$. 

\begin{table}
\caption{Errors in the conserved quantities and with respect to a reference simulation for the 1D-1D Landau Damping testcase (section \ref{subsec:LandauDamping} )}
\begin{tabular}{c c c c c}
\hline
resolution ($N_x$ -- $N_t$ -- $\varepsilon$) & $\epsilon_m$ & $\epsilon_p$ & $\epsilon_h$ & $\epsilon_f$ \\
\hline
32 -- $4\cdot 10^3$ -- $10^{-10}$ & $ 9.09 \cdot 10^{-7}$ & $5.18\cdot 10^{-6} $ & $8.52 \cdot 10^{-5} $  & $1.06 \cdot 10^{-3} $ \\
32 -- $4\cdot 10^3$ -- $10^{-12}$ & $1.12 \cdot 10^{-6} $ & $6.24\cdot 10^{-6} $ & $2.43\cdot 10^{-5} $  & $4.15\cdot10^{-4} $ \\
32 -- $4\cdot 10^3$ -- $10^{-14}$ & $1.02\cdot 10^{-7} $ & $5.59 \cdot 10^{-6} $ & $1.01 \cdot 10^{-5} $  & $4.14 \cdot 10^{-4} $\\
32 -- $4\cdot 10^3$ -- $10^{-16}$ & $7.08 \cdot 10^{-8} $ & $5.60 \cdot 10^{-6}$ & $9.45 \cdot 10^{-6} $  & $4.14 \cdot{10^{-4}}$\\
\hline
32 -- $8\cdot 10^3$ -- $10^{-10}$ & $3.64 \cdot 10^{-6}$ & $6.90 \cdot 10^{-6}$ & $9.31 \cdot 10^{-4}$  & $3.28 \cdot 10^{-3}$\\
32 -- $8\cdot 10^3$ -- $10^{-12}$ & $1.11 \cdot 10^{-6}$ & $6.28 \cdot 10^{-6}$ & $2.48 \cdot 10^{-5}$  & $4.05 \cdot 10^{-4}$\\
32 -- $8\cdot 10^3$ -- $10^{-14}$ & $4.32 \cdot 10^{-7}$ & $5.60 \cdot 10^{-6}$ & $1.60 \cdot 10^{-5}$  & $4.05 \cdot 10^{-4}$\\
32 -- $8\cdot 10^3$ -- $10^{-16}$ & $6.71 \cdot 10^{-8}$ & $5.64 \cdot 10^{-6}$ & $8.75 \cdot 10^{-6}$  & $4.05 \cdot 10^{-4} $\\
\hline
32 -- $16\cdot 10^3$ -- $10^{-10}$ & $4.38 \cdot 10^{-6}$ & $7.57 \cdot 10^{-6}$ & $1.04 \cdot 10^{-3}$  & $3.40 \cdot 10^{-3}$\\
32 -- $16\cdot 10^3$ -- $10^{-12}$ & $1.04 \cdot 10^{-6}$ & $6.31 \cdot 10^{-6} $ & $2.32\cdot 10^{-5}$  & $4.01\cdot 10^{-4}$\\
32 -- $16\cdot 10^3$ -- $10^{-14}$ & $1.14 \cdot 10^{-6}$ & $6.31 \cdot 10^{-6}$ & $2.46\cdot 10^{-5}$  & $4.01 \cdot 10^{-4}$ \\
32 -- $16\cdot 10^3$ -- $10^{-16}$ & $6.01\cdot 10^{-8}$ & $5.67 \cdot 10^{-6}$ & $8.53 \cdot 10^{-6}$  & $4.00 \cdot 10^{-4}$ \\
\hline
\hline
64 -- $4\cdot 10^3$ -- $10^{-10}$ & $6.75 \cdot 10^{-7}$ & $2.61 \cdot 10^{-6}$ & $8.33 \cdot 10^{-5}$  & $9.71 \cdot 10^{-4}$ \\
64 -- $4\cdot 10^3$ -- $10^{-12}$ & $8.58 \cdot 10^{-7}$ & $3.19 \cdot 10^{-6}$ & $1.98\cdot 10^{-5}$  & $1.33 \cdot 10^{-4}$ \\
64 -- $4\cdot 10^3$ -- $10^{-14}$ & $2.11 \cdot 10^{-7}$ & $2.68 \cdot 10^{-6}$ & $1.08 \cdot 10^{-5}$  & $1.28 \cdot 10^{-4}$ \\
64 -- $4\cdot 10^3$ -- $10^{-16}$ & $1.54 \cdot 10^{-8}$ & $2.68 \cdot 10^{-6}$ & $8.32 \cdot 10^{-6}$  & $1.28 \cdot 10^{-4}$ \\
\hline
64 -- $8\cdot 10^3$ -- $10^{-10}$ & $1.21 \cdot 10^{-6}$ & $ 2.70 \cdot 10^{-6} $ & $3.29 \cdot 10^{-4}$  & $2.91 \cdot 10^{-3}$ \\
64 -- $8\cdot 10^3$ -- $10^{-12}$ & $8.70 \cdot 10^{-7}$ & $3.19 \cdot 10^{-6}$ & $1.92 \cdot 10^{-5}$  & $1.30 \cdot 10^{-4}$ \\
64 -- $8\cdot 10^3$ -- $10^{-14}$ & $8.64 \cdot 10^{-7}$ & $2.61\cdot 10^{-6}$ & $2.14\cdot 10^{-5}$  & $1.28 \cdot 10^{-4}$ \\
64 -- $8\cdot 10^3$ -- $10^{-16}$ & $1.77 \cdot 10^{-8}$ & $2.69 \cdot 10^{-6}$ & $8.38 \cdot 10^{-6}$  & $1.25 \cdot 10^{-4}$ \\
\hline
64 -- $16\cdot 10^3$ -- $10^{-10}$ & $1.42 \cdot 10^{-6} $ & $3.61 \cdot 10^{-6}$ & $2.62 \cdot 10^{-4}$  & $3.30 \cdot 10^{-3}$ \\
64 -- $16\cdot 10^3$ -- $10^{-12}$ & $9.95 \cdot 10^{-7}$ & $3.19 \cdot 10^{-6}$ & $2.15 \cdot 10^{-5}$  & $1.28 \cdot 10^{-4}$ \\
64 -- $16\cdot 10^3$ -- $10^{-14}$ & $9.25 \cdot 10^{-7}$ & $3.19 \cdot 10^{-6}$ & $2.03 \cdot 10^{-5}$  & $1.28 \cdot 10^{-4}$ \\
64 -- $16\cdot 10^3$ -- $10^{-16}$ & $3.10 \cdot 10^{-8}$ & $2.71 \cdot 10^{-6}$ & $8.54 \cdot 10^{-6}$  & $1.23 \cdot 10^{-4}$ \\
\hline
\hline
\end{tabular}
\label{tab:LandauDampingErrors_1}
\end{table}

\begin{table}
\begin{tabular}{c c c c c}
\hline
resolution ($N_x$ -- $N_t$ -- $\varepsilon$) & $\epsilon_m$ & $\epsilon_p$ & $\epsilon_h$ & $\epsilon_f$ \\
\hline
128 -- $4\cdot 10^3$ -- $10^{-10}$ & $4.45 \cdot 10^{-7}$ & $1.29 \cdot 10^{-6}$ & $9.31 \cdot 10^{-5}$  & $1.13 \cdot 10^{-3}$ \\
128 -- $4\cdot 10^3$ -- $10^{-12}$ & $1.07 \cdot 10^{-6}$ & $1.69 \cdot 10^{-6}$ & $1.97 \cdot 10^{-5}$  & $7.60 \cdot 10^{-5}$ \\
128 -- $4\cdot 10^3$ -- $10^{-14}$ & $2.49 \cdot 10^{-7}$ & $1.34 \cdot 10^{-6}$ & $1.10 \cdot 10^{-5}$  & $6.30 \cdot 10^{-5}$ \\
128 -- $4\cdot 10^3$ -- $10^{-16}$ & $1.44 \cdot 10^{-8}$ & $1.33 \cdot 10^{-6}$ & $8.45 \cdot 10^{-6}$  & $6.26 \cdot 10^{-5}$ \\
\hline
128 -- $8\cdot 10^3$ -- $10^{-10}$ & $9.93 \cdot 10^{-7}$ & $1.45 \cdot 10^{-6}$ & $1.73 \cdot 10^{-4}$  & $2.87 \cdot 10^{-3}$ \\
128 -- $8\cdot 10^3$ -- $10^{-12}$ & $1.18 \cdot 10^{-6}$ & $1 .71 \cdot 10^{-6 }$ & $2.20 \cdot 10^{-5}$  & $6.92 \cdot 10^{-5}$ \\
128 -- $8\cdot 10^3$ -- $10^{-14}$ & $8.40 \cdot 10^{-7}$ & $1.38 \cdot 10^{-6}$ & $1.96 \cdot 10^{-5}$  & $6.48 \cdot 10^{-5}$ \\
128 -- $8\cdot 10^3$ -- $10^{-16}$ & $2.31 \cdot 10^{-8}$ & $1.34 \cdot 10^{-6}$ & $8.60 \cdot 10^{-6} $  & $6.07 \cdot 10^{-5}$ \\
\hline
128 -- $16\cdot 10^3$ -- $10^{-10}$ & $8.75 \cdot 10^{-7}$ & $1.71 \cdot 10^{-6}$ & $3.90 \cdot 10^{-4}$  & $3.29 \cdot 10^{-3}$ \\
128 -- $16\cdot 10^3$ -- $10^{-12}$ & $1.45 \cdot 10^{-6}$ & $1.67 \cdot 10^{-6}$ & $2.56 \cdot 10^{-5}$  & $6.79 \cdot 10^{-5}$ \\
128 -- $16\cdot 10^3$ -- $10^{-14}$ & $1.10 \cdot 10^{-6}$ & $1.69 \cdot 10^{-6}$ & $2.14 \cdot 10^{-5}$  & $6.79 \cdot 10^{-5}$ \\
128 -- $16\cdot 10^3$ -- $10^{-16}$ & $4.13 \cdot 10^{-8}$ & $1.35 \cdot 10^{-6}$ & $8.79 \cdot 10^{-6}$  & $5.98 \cdot 10^{-5}$ \\
\hline 
\hline
256 -- $4\cdot 10^3$ -- $10^{-10}$ & $ n.c. $ & $n.c. $ & $ n.c. $  & $n.c. $ \\
256 -- $4\cdot 10^3$ -- $10^{-12}$ & $1.03 \cdot 10^{-6}$ & $7.69 \cdot 10^{-7}$ & $1.99 \cdot 10^{-5}$  & $8.88 \cdot 10^{-5}$ \\
256 -- $4\cdot 10^3$ -- $10^{-14}$ & $2.54 \cdot 10^{-7}$ & $6.68 \cdot 10^{-7}$ & $1.13 \cdot 10^{-5}$  & $5.73 \cdot 10^{-5}$ \\
256 -- $4\cdot 10^3$ -- $10^{-16}$ & $2.54 \cdot 10^{-7}$ & $6.68 \cdot 10^{-7}$ & $1.12 \cdot 10^{-5}$  & $5.73 \cdot 10^{-5}$ \\
\hline
256 -- $8\cdot 10^3$ -- $10^{-10}$ & $9.28 \cdot 10^{-7}$ & $6.99 \cdot 10^{-7}$ & $1.61 \cdot 10^{-4}$  & $2.87 \cdot 10^{-3}$ \\
256 -- $8\cdot 10^3$ -- $10^{-12}$ & $9.77 \cdot 10^{-7}$ & $8.14 \cdot 10^{-7}$ & $1.93 \cdot 10^{-5}$  & $9.01 \cdot 10^{-5}$ \\
256 -- $8\cdot 10^3$ -- $10^{-14}$ & $8.35 \cdot 10^{-7}$ & $6.99 \cdot 10^{-7}$ & $1.93 \cdot 10^{-5}$  & $5.94 \cdot 10^{-5}$ \\
256 -- $8\cdot 10^3$ -- $10^{-16}$ & $1.55 \cdot 10^{-8}$ & $6.72 \cdot 10^{-7}$ & $8.52 \cdot 10^{-6}$  & $5.49 \cdot 10^{-5}$ \\
\hline
256 -- $16\cdot 10^3$ -- $10^{-10}$ & $5.02 \cdot 10^{-7}$ & $7.87 \cdot 10^{-7}$ & $6.46 \cdot 10^{-4}$  & $3.32 \cdot 10^{-3}$ \\
256 -- $16\cdot 10^3$ -- $10^{-12}$ & $1.64 \cdot 10^{-6}$ & $8.14 \cdot 10^{-7}$ & $2.73 \cdot 10^{-5}$  & $6.83 \cdot 10^{-5}$ \\
256 -- $16\cdot 10^3$ -- $10^{-14}$ & $1.37 \cdot 10^{-6}$ & $7.68 \cdot 10^{-7}$ & $2.35 \cdot 10^{-5}$  & $6.24 \cdot 10^{-5}$ \\
256 -- $16\cdot 10^3$ -- $10^{-16}$ & $3.04 \cdot 10^{-8}$ & $6.75 \cdot 10^{-7}$ & $8.69 \cdot 10^{-6}$  & $5.41 \cdot 10^{-5}$ \\
\end{tabular}
\label{tab:LandauDampingErrors_2}
\end{table}
 
\begin{figure}
\begin{center}
 \includegraphics[scale=0.5=\textwidth]{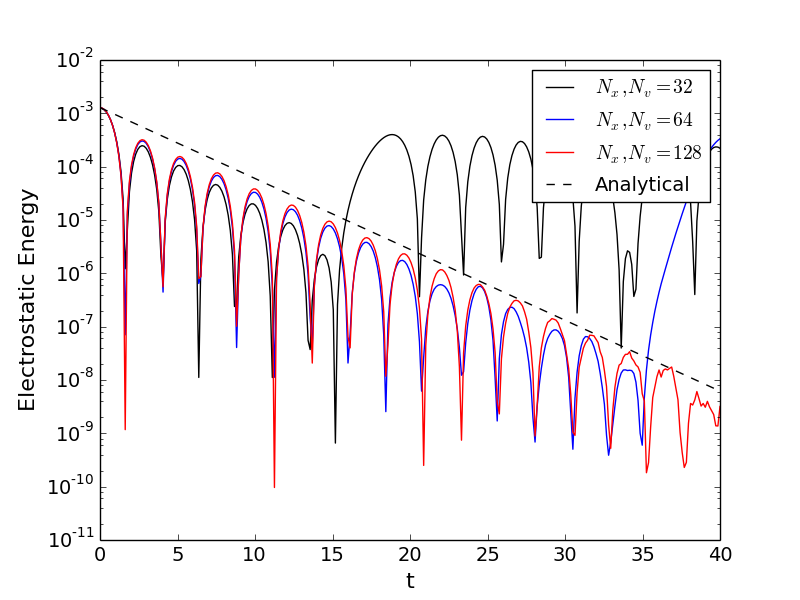}
 \caption{Linear Landau damping testcase (see section \ref{subsec:LandauDamping}). Electrostatic energy as function of time for different resolutions in the phase space. Dash line is the analytical expected decay for the electrostatic energy.}
 \label{fig:LandauDamping}
 \end{center}
\end{figure}

The results of the numerical testcases are reported in Table~\ref{tab:LandauDampingErrors_1}. The numerical experiments show that the conservation of mass,
momentum and hamiltonian are well respected for all the discretizations adopted. Concerning the error with respect to the reference simulation, it has been observed that the error is
dominated by the space-time discretization. In particular, using a residual tolerance ($\varepsilon$) too low with a given discretization does not allow to improve the results.
On the other hand, when refining the mesh or when using a small $\Delta t$, a high tolerance may result in a non-convergence of the solution. In Figure~\ref{fig:LandauDamping} the decay in
electrostatic energy is shown as a function of time for $N_x=N_v=(32,64,128)$, compared to the theoretical decay. The behavior in terms of decay and of Langmuir frequency is in agreement with the results presented in the litterature.

\medskip

Let us mention here that the memory needed to store a rank-$n$ function is $n(N_x+ N_v)$, which has to be compared with $N_xN_v$, the total number of degrees of freedom in the system. 
The evolution in time of the ranks of the approximation of $f$ computed by the approach is plotted in Figure~\ref{fig:ranksLandau} for the following discretization parameters: $N_x = N_v = 512$, 
$\epsilon = 10^{-16}$, $N_t = 32 000$, and $T = 10$. We observe that the maximal rank of the approximation is obtained at the final time of the simulation and is approximately equal to $n =50$. 
The worst compression factor $\frac{N_x N_v}{n(N_x + N_v)} \approx 5$ remains reasonable in this 1d case.
We observe numerically an interesting trend: the rank seems to increase linearly with time and are independent of $N_x$ and $N_v$.

\begin{figure}
\begin{center}
\includegraphics[scale=0.5=\textwidth]{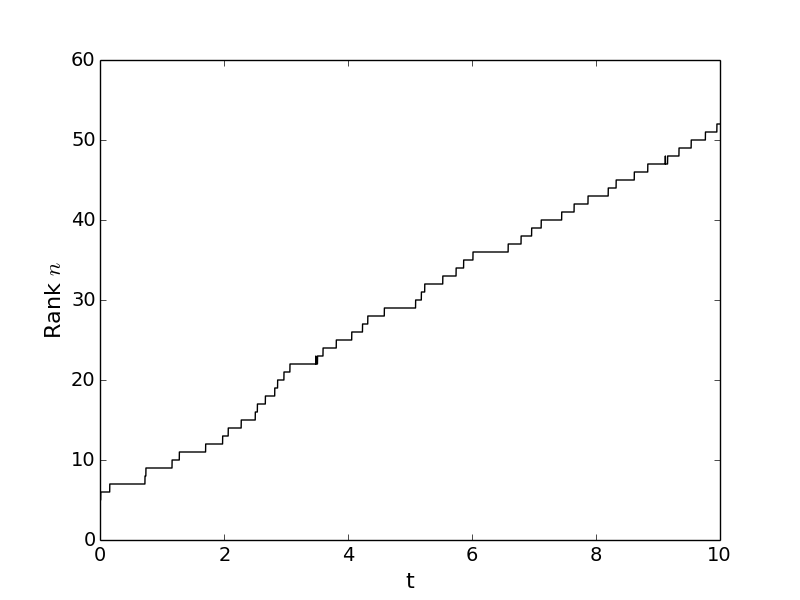}
\caption{Evolution in time of the rank of the approximation of $f$.}
\label{fig:ranksLandau}
\end{center}
\end{figure}

\subsection{Two stream instability.}
\label{subsec:stream1D}
We present the classical 1D-1D two stream-instability testcase. The domain is $\Omega = \Omega_x \times \Omega_v = [0, 10\pi /\omega] \times [-10,10]$. 
\begin{figure}[!tbp]
  \centering
  \begin{minipage}[b]{0.8\textwidth}
    \includegraphics[width=\textwidth]{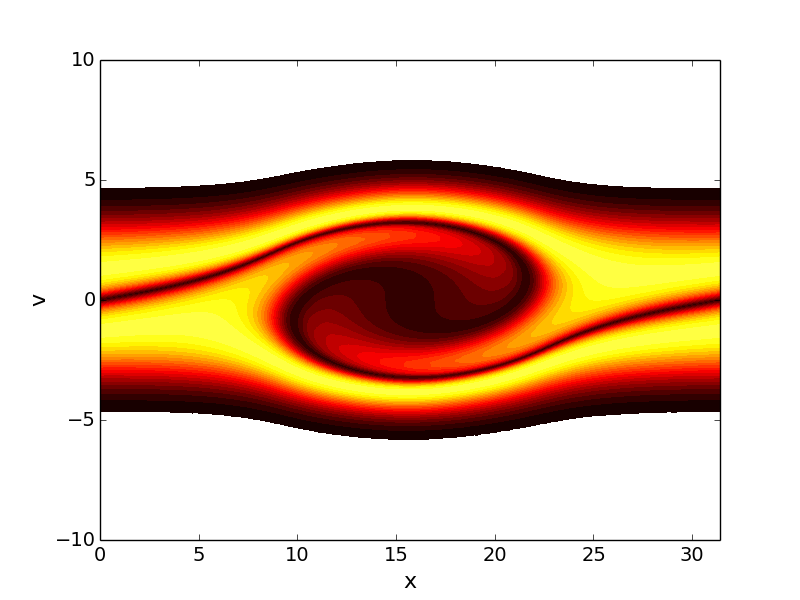}
  \end{minipage}
  \hfill
  \begin{minipage}[b]{0.8\textwidth}
    \includegraphics[width=\textwidth]{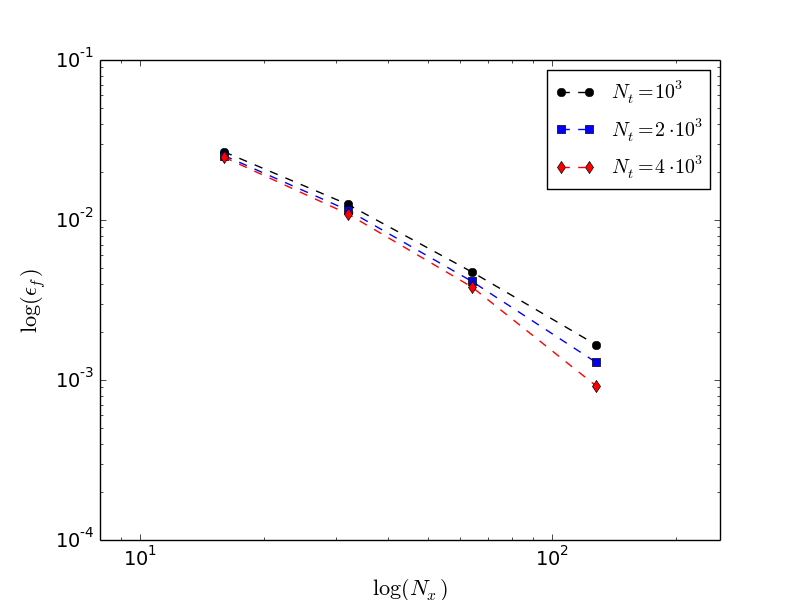}
  \end{minipage}
  \caption{Two stream instability testcase (section \ref{subsec:stream1D}): a) Contours of the reference solution (black for the lowest value) at final time and b) Errors with respect to a reference simulation as function of the phase space discretization, for different time steps.}
\label{fig:stream1D}
\end{figure}

The final time of the evolution is $T = 36.0$. The initial condition has the following form:
\begin{eqnarray}
f(x,v; t=0) = F(x)G(v), \\
F(x) = 1 + \beta \cos(k x), \\
G(v) = \frac{1}{\sqrt{4\pi}} \exp \left( - \frac{(v-v_0)^2}{2} \right) + \frac{1}{\sqrt{4\pi}} \exp \left( - \frac{(v+v_0)^2}{2} \right),
\end{eqnarray}
where $v_0 = 2.4$ and $\beta = 10^{-3}$. A mixed discretization is considered, namely a spectral collocation method for the space and standard centered finite differences in velocity. The contour 
plot of the reference solution at final time is shown in Figure~\ref{fig:stream1D}. The conservation properties and the errors with respect to a reference simulation 
are investigated by varying the phase space discretization as well as time step and the residual tolerance. The results are very similar to the ones obtained for the linear Landau damping 
testcase. For the sake of brevity, the conservation error properties are not reported. The errors with respect to a reference simulation 
($N_x = 256, \ N_v = 512, \ N_t = 8\cdot 10^3, \varepsilon = 10^{-14}$) are computed by varying the discretization of the phase space and the time step. 
In particular, $N_x$ ranges in $[16,32,64,128]$, $N_v = 2 N_x$ and $N_t = [10^3, 2\cdot 10^3, 4\cdot 10^3]$. The tolerance on the residual is varied and the errors when considering $\
\varepsilon=10^{-12}$ are shown in Figure~\ref{fig:stream1D}. A second order convergence rate is retrieved for the space discretization, at fixed 
time step. Whereas the error is relatively insensitive to the time step when a coarse discretization is considered, a definite dependence is seen for the finest grid resolution. 
This is due to the fact that, on the coarse grids, the discretization error is dominated by the space discretization error.

\medskip
%
%
%

\subsection{2D-2D simulations}

In this section, we present a 2D-2D Landau damping test case. The simulation domains are $\Omega_x = (0,4\pi)^2$ and $\Omega_v = (-10,10)^2$. We impose as before periodic boundary conditions on $\Omega_x$ 
and homogeneous Dirichlet boundary conditions on $\Omega_v$. Uniform tensor discretizations are used for $\Omega_x$ and $\Omega_v$, and two different simulations are obtained for the following numbers of degrees 
of freedom: $(N_x, N_v) = [ (16^2, 32^2), (32^2, 64^2)]$. The error tolerance criterion of the algorithm is set to be $\epsilon = 10^{-15}$. Time step is equal to $\Delta t  = 2.10^{-4}$. 

The initial condition is defined as
$$
f_0(x,v) = \frac{1}{\sqrt{2\pi}^3}\left[1 - \beta \sin(\omega x_1) - \beta\sin(\omega x_2)\right] \exp( -\frac 1 2 (v_1^2 + v_2^2)),
$$
where $\beta = 0.01$ and $\omega = 0.5$.

\medskip

The evolution of the electric energy as a function of time is shown in Figure~\ref{fig:Landau2D2D} for the two different discretizations mentioned above. It can be seen 
that these are in agreement with the predicted analytical decay. Conservation properties of mass, momentum and total energy behave similarly to 1D-1D cases. Ranks of the approximated solution obtained by the 
algorithm also seem to increase linearly with time.

\begin{figure}
\centering
\includegraphics[scale=0.5=\textwidth]{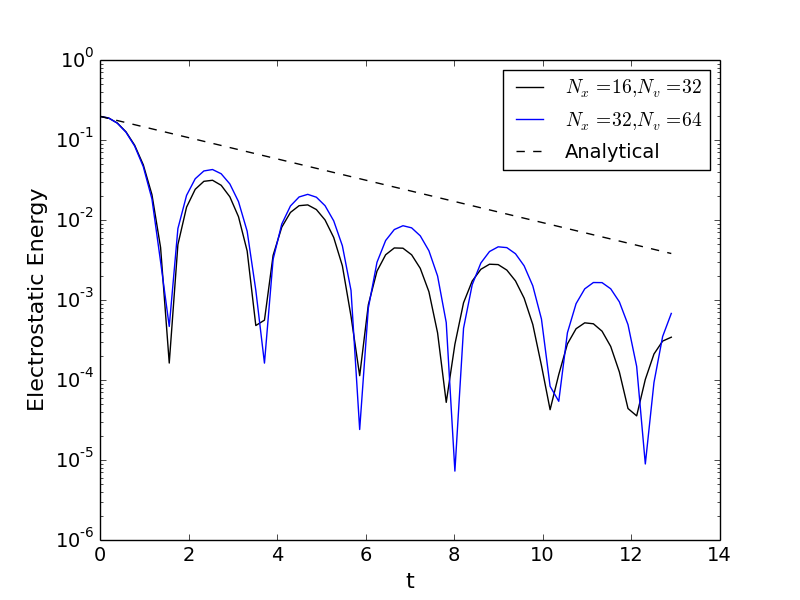} 
\caption{Evolution of the electric energy as a function of time in the 2D Landau damping test case.}
\label{fig:Landau2D2D}
\end{figure}

\medskip

We mention here that encouraging preliminary results have been obtained on 3D-3D test cases. Parallelisation of the method, which is needed to reduce the computational cost, 
is work in progress, and should enable to obtain results in more realistic settings.



\section{Conclusion}
In this work a dynamical adaptive tensor method has been proposed to build parsimonious discretizations for the Vlasov-Poisson system. 
It allows to treat generic geometries and can be applied to generic heterogeneous discretizations in space and velocity, making it 
a flexible tool for the simulation of kinetic equation within an Eulerian framework.
The method is dynamical in time and the time advancing is design to preserve the Hamiltonian character of the system, 
with a second order accuracy. Several testcases were proposed to validate the method and assess its properties.

Several perspectives arise, concerning the parallelisation of the method (which is mandatory to deal with more realistic 3D-3D settings) 
and its extension to other kinetic equations, involving collision operators. These will be the object of a further investigation.

\section*{Appendix A: Proof of Proposition~\ref{prop:FPPGD}}\label{sec:app2}

\begin{proof}[Proof of Proposition~\ref{prop:FPPGD}]
Let us denote by $b\in H$ the Riesz representative of $\bfb$ in $H$. The element $f$ solution of (\ref{eq:nonsym}) is then the unique solution to 
$$
(I + \widetilde{A})f  =b,
$$
where $I$ denotes the identity operator on $H$. By assumption, $\|\widetilde{A}\|_{\cL(H;H)} \leq M \kappa$, thus both (A1) or (A2) imply that $\|\widetilde{\bfa}\|_{\cL(H\times H ;\R)} < 1$. 
For all $n\in\N$, let us denote by $R_n: = b - (I + \widetilde{A})f_n$ the residual of the equation in $H$ after $n$ iterations of the Fixed-point PGD algorithm. Since 
$(r_{n+1}, s_{n+1}) \in H_x \times H_v$ is solution to the minimization problem (\ref{eq:PGDit3}), it satisfies
\begin{equation}\label{eq:newpb}
(r_{n+1}, s_{n+1}) \in \mathop{\mbox{\rm argmin}}_{(r,s)\in H_x \times H_v} \|R_n - r\otimes s\|_H^2. 
\end{equation}
Thus, we have the following properties on the tensor product function $r_{n+1}\otimes s_{n+1}$
\begin{eqnarray}
& (r_{n+1}, s_{n+1}) \in \mathop{\mbox{argmax}}_{(r,s) \in H_x \times H_v} \frac{\langle R_n , r\otimes s \rangle_H}{\|r\otimes s\|_H},   \label{eq:prop1}\\
& \|r_{n+1} \otimes s_{n+1}\|_H  = \mathop{\max}_{(r,s) \in H_x \times H_v} \frac{\langle R_n , r\otimes s \rangle_H}{\|r\otimes s\|_H}, \label{eq:prop2} \\
& \langle R_n - r_{n+1}\otimes s_{n+1}, r_{n+1}\otimes s_{n+1} \rangle_H = 0. \label{eq:prop3} \nonumber
\end{eqnarray}

We refer the reader to~\cite{le2009results,cances2011,falco2011, falco2012, figueroa2012} for a proof of the properties (\ref{eq:prop1}), (\ref{eq:prop2}) and (\ref{eq:prop3}), which are consequences of (\ref{eq:newpb}). Let us already point out here that 
if $H= H_x\otimes H_v$ (which is the case when assumption (A1) is satisfied), we have in addition 
\begin{equation}\label{eq:prop4}
 \mathop{\max}_{(r,s) \in H_x \times H_v} \frac{\langle R_n - r_{n+1}\otimes s_{n+1}, r\otimes s \rangle_H}{\|r\otimes s\|_H}\leq  \|r_{n+1} \otimes s_{n+1}\|_H. 
\end{equation}

\medskip

Thus, since $f_{n+1} = f_n + r_{n+1} \otimes s_{n+1}$,  
\begin{eqnarray*}
\|R_n\|^2_H - \|R_{n+1}\|_H^2 & =  &\|R_n\|_H^2 - \|R_n - r_{n+1}\otimes s_{n+1} - \widetilde{A} r_{n+1}\otimes s_{n+1} \|_H^2, \\
& = & \|R_n\|_H^2 - \|R_n - r_{n+1}\otimes s_{n+1}\|^2 - \|\widetilde{A} r_{n+1}\otimes s_{n+1} \|_H^2 \\
&& + 2 \langle R_n - r_{n+1}\otimes s_{n+1}, \widetilde{A} r_{n+1}\otimes s_{n+1}\rangle_H,\\
& = & \|R_n\|_H^2 - \|R_n\|_H^2  +  \|r_{n+1}\otimes s_{n+1}\|_H^2 - \|\widetilde{A} r_{n+1}\otimes s_{n+1} \|_H^2 \\
&& + 2 \langle R_n 
 - r_{n+1}\otimes s_{n+1}, \widetilde{A} r_{n+1}\otimes s_{n+1}\rangle_H, \quad \mbox{ (using (\ref{eq:prop3}))}\\
& = & + \|r_{n+1}\otimes s_{n+1}\|_H^2 - \|\widetilde{A} r_{n+1}\otimes s_{n+1} \|_H^2 \\
&& + 2 \sum_{\mu=1}^M \langle R_n - r_{n+1}\otimes s_{n+1}, (A_x^\mu r_{n+1}) \otimes (A_v^\mu s_{n+1})\rangle_H,\\
& \geq & (1 - \kappa M) \|r_{n+1}\otimes s_{n+1}\|_H^2  + 2 \sum_{\mu=1}^M \langle R_n - r_{n+1}\otimes s_{n+1}, (A_x^\mu r_{n+1}) \otimes (A_v^\mu s_{n+1})\rangle_H.\\
\end{eqnarray*}
At this point, we treat the two cases separately. Let us first assume that (A1) holds. Then, 
\begin{eqnarray*}
\|R_n\|^2_H - \|R_{n+1}\|_H^2 & \geq & (1 - \kappa M) \|r_{n+1}\otimes s_{n+1}\|_H^2 \\
&& - 2 \sum_{\mu=1}^M \|r_{n+1}\otimes s_{n+1}\|_H \| (A_x^\mu r_{n+1}) \otimes (A_v^\mu s_{n+1})\|_H, \quad \mbox{(using (\ref{eq:prop4}))}\\
 & = & (1 - \kappa M) \|r_{n+1}\otimes s_{n+1}\|_H^2 \\
 && - 2 \sum_{\mu=1}^M \|r_{n+1}\otimes s_{n+1}\|_H \| (A_x^\mu \otimes A_v^\mu)( r_{n+1} \otimes s_{n+1} )\|_H, \\
 & \geq & (1 - \kappa M) \|r_{n+1}\otimes s_{n+1}\|_H^2  - 2 \sum_{\mu=1}^M \kappa \|r_{n+1}\otimes s_{n+1}\|_H^2 , \\
  & \geq & (1 - 3\kappa M) \|r_{n+1}\otimes s_{n+1}\|_H^2.\\
\end{eqnarray*}
Assume now that (A2) holds. Then, 
\begin{eqnarray*}
\|R_n\|^2_H - \|R_{n+1}\|_H^2 & \geq & (1 - \kappa M) \|r_{n+1}\otimes s_{n+1}\|_H^2 \\
&& - 4 \sum_{\mu=1}^M \|r_{n+1}\otimes s_{n+1}\|_H \| (A_x^\mu r_{n+1}) \otimes (A_v^\mu s_{n+1})\|_H, \quad \mbox{(using (\ref{eq:prop2}))}\\
 & = & (1 - \kappa M) \|r_{n+1}\otimes s_{n+1}\|_H^2  - 4 \sum_{\mu=1}^M \|r_{n+1}\otimes s_{n+1}\|_H \| (A_x^\mu \otimes A_v^\mu)( r_{n+1} \otimes s_{n+1} )\|_H, \\
 & \geq & (1 - \kappa M) \|r_{n+1}\otimes s_{n+1}\|_H^2  - 4 \sum_{\mu=1}^M \kappa \|r_{n+1}\otimes s_{n+1}\|_H^2 , \\
  & \geq & (1 - 5\kappa M) \|r_{n+1}\otimes s_{n+1}\|_H^2.\\
\end{eqnarray*}
In both cases, there exists a constant $\eta>0$ such that 
$$
\|R_n\|^2_H - \|R_{n+1}\|_H^2 \geq \eta \|r_{n+1}\otimes s_{n+1}\|_H^2.
$$
Thus, the sequence $(\|R_n\|_H^2)_{n\in \N}$ is non-increasing and converges. Since $\|\widetilde{A}\|_{\cL(H;H)} < 1$, and $R_n = b - (\widetilde{A} + I)f_n$, this implies that $(f_n)_{n\in\N}$ is a bounded sequence in $H$. 
Besides, the series $\sum_{n\in\N^*} \|r_n\otimes s_n\|_H^2$ is convergent, and $\dps \|r_n\otimes s_n\|_H \mathop{\longrightarrow}_{n\to +\infty} 0$. Up to the extraction of a subsequence 
(still denoted $(f_n)_{n\in\N}$ for the sake of simplicity), 
$(f_n)_{n\in \N}$ weakly converges in $H$ to some $g \in H$. Property (\ref{eq:prop2}) implies that 
$$
\forall (r,s)\in H_x \times H_v, \quad \mid \langle R_n, r\otimes s \rangle_H \mid  = \mid \langle b - (I + \widetilde{A})f_n , r\otimes s \rangle_H \mid  \leq \|r_{n+1}\otimes s_{n+1}\|_H \|r\otimes s\|_H. 
$$
Since $\dps b - (I + \widetilde{A})f_n \mathop{\rightharpoonup}_{n\to +\infty} b - (I + \widetilde{A})g$, we obtain that $g$ is necessarily equal to $f$, the unique solution of (\ref{eq:nonsym}). 
The sequence $(f_n)_{n\in\N}$ thus entirely converges (weakly) to $f$ in $H$. The strong convergence can be obtained using the same arguments as in~\cite{cances2011}, which yields the desired result.
\end{proof}

\bibliographystyle{plain}
\bibliography{biblioVlasov.bib}

\begin{thebibliography}{10}

\bibitem{ambrosio2014}
Luigi Ambrosio, Maria Colombo, and Alessio Figalli.
\newblock On the lagrangian structure of transport equations: the
  {Vlasov-Poisson} system.
\newblock {\em arXiv preprint arXiv:1412.3608}, 2014.

\bibitem{arsenev1975}
Aleksei~Alekseevich Arsenev.
\newblock Existence in the large of a weak solution to the {Vlasov} system of
  equations.
\newblock {\em Zhurnal Vychislitelnoi Matematiki i Matematicheskoi Fiziki},
  15:136--147, 1975.

\bibitem{BardosDegond}
Claude Bardos and Pierre Degond.
\newblock Global existence for the {Vlasov-Poisson} equation in 3 space
  variables with small initial data.
\newblock In {\em Annales de l'IHP Analyse non lin{\'e}aire}, volume~2, pages
  101--118, 1985.

\bibitem{brackbill2016}
JU~Brackbill.
\newblock On energy and momentum conservation in particle-in-cell plasma
  simulation.
\newblock {\em Journal of Computational Physics}, 317:405--427, 2016.

\bibitem{cances2011}
Eric Cances, Virginie Ehrlacher, and Tony Lelievre.
\newblock Convergence of a greedy algorithm for high-dimensional convex
  nonlinear problems.
\newblock {\em Mathematical Models and Methods in Applied Sciences},
  21(12):2433--2467, 2011.

\bibitem{cances2013}
Eric Cances, Virginie Ehrlacher, and Tony Lelievre.
\newblock Greedy algorithms for high-dimensional non-symmetric linear problems.
\newblock In {\em ESAIM: Proceedings}, volume~41, pages 95--131. EDP Sciences,
  2013.

\bibitem{casas2015}
Fernando Casas, Nicolas Crouseilles, Erwan Faou, and Michel Mehrenberger.
\newblock High-order hamiltonian splitting for {Vlasov-Poisson} equations.
\newblock {\em arXiv preprint arXiv:1510.01841}, 2015.

\bibitem{cazeaux2014multiscale}
Paul Cazeaux and Jan~S Hesthaven.
\newblock Multiscale time-integration for particle-in-cell methods.
\newblock Technical report, Elsevier, 2014.

\bibitem{charles2013}
Fr{\'e}d{\'e}rique Charles, Bruno Despr{\'e}s, and Michel Mehrenberger.
\newblock Enhanced convergence estimates for semi-lagrangian schemes
  application to the {Vlasov--Poisson} equation.
\newblock {\em SIAM Journal on Numerical Analysis}, 51(2):840--863, 2013.

\bibitem{chinesta2010}
Francisco Chinesta, Amine Ammar, and El{\'\i}as Cueto.
\newblock Recent advances and new challenges in the use of the proper
  generalized decomposition for solving multidimensional models.
\newblock {\em Archives of Computational methods in Engineering},
  17(4):327--350, 2010.

\bibitem{chinesta2011}
Francisco Chinesta, Pierre Ladeveze, and El{\'\i}as Cueto.
\newblock A short review on model order reduction based on proper generalized
  decomposition.
\newblock {\em Archives of Computational Methods in Engineering},
  18(4):395--404, 2011.

\bibitem{cho2016}
H~Cho, D~Venturi, and GE~Karniadakis.
\newblock Numerical methods for high-dimensional probability density function
  equations.
\newblock {\em Journal of Computational Physics}, 305:817--837, 2016.

\bibitem{crouseilles2009}
Nicolas Crouseilles, Guillaume Latu, and Eric Sonnendr{\"u}cker.
\newblock A parallel {Vlasov} solver based on local cubic spline interpolation
  on patches.
\newblock {\em Journal of Computational Physics}, 228(5):1429--1446, 2009.

\bibitem{crouseilles2010}
Nicolas Crouseilles, Michel Mehrenberger, and Eric Sonnendr{\"u}cker.
\newblock Conservative semi-lagrangian schemes for {Vlasov} equations.
\newblock {\em Journal of Computational Physics}, 229(6):1927--1953, 2010.

\bibitem{degond2004}
Pierre Degond, Lorenzo Pareschi, and Giovanni Russo.
\newblock {\em Modeling and computational methods for kinetic equations}.
\newblock Springer Science \& Business Media, 2004.

\bibitem{Dolbeault}
Laurent Desvillettes and Jean Dolbeault.
\newblock On long time asymptotics of the {Vlasov—Poisson—Boltzmann}
  equation.
\newblock {\em Communications in partial differential equations},
  16(2-3):451--489, 1991.

\bibitem{espig2015}
Mike Espig and Aram Khachatryan.
\newblock Convergence of alternating least squares optimisation for rank-one
  approximation to high order tensors.
\newblock {\em arXiv preprint arXiv:1503.05431}, 2015.

\bibitem{falco2011}
Antonio Falco and Anthony Nouy.
\newblock A proper generalized decomposition for the solution of elliptic
  problems in abstract form by using a functional eckart--young approach.
\newblock {\em Journal of Mathematical Analysis and Applications},
  376(2):469--480, 2011.

\bibitem{falco2012}
Antonio Falc{\'o} and Anthony Nouy.
\newblock Proper generalized decomposition for nonlinear convex problems in
  tensor banach spaces.
\newblock {\em Numerische Mathematik}, 121(3):503--530, 2012.

\bibitem{figueroa2012}
Leonardo~E Figueroa and Endre S{\"u}li.
\newblock Greedy approximation of high-dimensional {Ornstein--Uhlenbeck}
  operators.
\newblock {\em Foundations of Computational Mathematics}, 12(5):573--623, 2012.

\bibitem{filbet2003}
Francis Filbet and Eric Sonnendr{\"u}cker.
\newblock Comparison of eulerian vlasov solvers.
\newblock {\em Computer Physics Communications}, 150(3):247--266, 2003.

\bibitem{germaschewski2016}
Kai Germaschewski, William Fox, Stephen Abbott, Narges Ahmadi, Kristofor
  Maynard, Liang Wang, Hartmut Ruhl, and Amitava Bhattacharjee.
\newblock The plasma simulation code: A modern particle-in-cell code with
  patch-based load-balancing.
\newblock {\em Journal of Computational Physics}, 318:305--326, 2016.

\bibitem{glassey1996}
Robert~T Glassey.
\newblock {\em The {Cauchy} problem in kinetic theory}.
\newblock SIAM, 1996.

\bibitem{grothendieck1996resume}
Alexandre Grothendieck.
\newblock R{\'e}sum{\'e} de la th{\'e}orie m{\'e}trique des produits tensoriels
  topologiques.
\newblock {\em Resenhas do Instituto de Matem{\'a}tica e Estat{\'\i}stica da
  Universidade de S{\~a}o Paulo}, 2(4):401--481, 1996.

\bibitem{hackbusch2008}
Wolfgang Hackbusch, Boris~N Khoromskij, and Eugene~E Tyrtyshnikov.
\newblock Approximate iterations for structured matrices.
\newblock {\em Numerische Mathematik}, 109(3):365--383, 2008.

\bibitem{lubich2003}
Ernst Hairer, Christian Lubich, and Gerhard Wanner.
\newblock Geometric numerical integration illustrated by the
  {St{\"o}rmer--Verlet} method.
\newblock {\em Acta numerica}, 12:399--450, 2003.

\bibitem{lubich2006}
Ernst Hairer, Christian Lubich, and Gerhard Wanner.
\newblock {\em Geometric numerical integration: structure-preserving algorithms
  for ordinary differential equations}, volume~31.
\newblock Springer Science \& Business Media, 2006.

\bibitem{hwang2004}
Hyung~Ju Hwang.
\newblock Regularity for the {Vlasov--Poisson} system in a convex domain.
\newblock {\em SIAM journal on mathematical analysis}, 36(1):121--171, 2004.

\bibitem{kormann2015}
Katharina Kormann.
\newblock A semi-lagrangian {Vlasov} solver in tensor train format.
\newblock {\em SIAM Journal on Scientific Computing}, 37(4):B613--B632, 2015.

\bibitem{ladeveze2010latin}
Pierre Ladeveze, J-C Passieux, and David N{\'e}ron.
\newblock The latin multiscale computational method and the proper generalized
  decomposition.
\newblock {\em Computer Methods in Applied Mechanics and Engineering},
  199(21):1287--1296, 2010.

\bibitem{le2009results}
Claude Le~Bris, Tony Lelievre, and Yvon Maday.
\newblock Results and questions on a nonlinear approximation approach for
  solving high-dimensional partial differential equations.
\newblock {\em Constructive Approximation}, 30(3):621--651, 2009.

\bibitem{LionsPerthame}
PIERRE-LOUIS Lions and BENOˆIT Perthame.
\newblock Propagation of moments and regularity for the 3-dimensional
  {Vlasov-Poisson} system.
\newblock {\em Inventiones mathematicae}, 105(1):415--430, 1991.

\bibitem{madaule2014}
{\'E}ric Madaule, Marco Restelli, and Eric Sonnendr{\"u}cker.
\newblock Energy conserving discontinuous {Galerkin} spectral element method
  for the {Vlasov--Poisson} system.
\newblock {\em Journal of Computational Physics}, 279:261--288, 2014.

\bibitem{MARSDEN}
Jerrold~E. Marsden and Alan Weinstein.
\newblock The hamiltonian structure of the {Maxwell-Vlasov} equations.
\newblock {\em Physica D: Nonlinear Phenomena}, 4(3):394 -- 406, 1982.

\bibitem{morrison2005}
PJ~Morrison.
\newblock Hamiltonian and action principle formulations of plasma physicsa).
\newblock {\em Physics of Plasmas (1994-present)}, 12(5):058102, 2005.

\bibitem{nouy2010priori}
Anthony Nouy.
\newblock A priori model reduction through proper generalized decomposition for
  solving time-dependent partial differential equations.
\newblock {\em Computer Methods in Applied Mechanics and Engineering},
  199(23):1603--1626, 2010.

\bibitem{pinto2008}
Martin~Campos Pinto and Michel Mehrenberger.
\newblock Convergence of an adaptive semi-lagrangian scheme for the
  {Vlasov-Poisson} system.
\newblock {\em Numerische Mathematik}, 108(3):407--444, 2008.

\bibitem{rohwedder2013local}
Thorsten Rohwedder and Andr{\'e} Uschmajew.
\newblock On local convergence of alternating schemes for optimization of
  convex problems in the tensor train format.
\newblock {\em SIAM Journal on Numerical Analysis}, 51(2):1134--1162, 2013.

\bibitem{temlyakov2008greedy}
Vladimir~N Temlyakov.
\newblock Greedy approximation.
\newblock {\em Acta Numerica}, 17:235--409, 2008.

\bibitem{uschmajew2012local}
Andr{\'e} Uschmajew.
\newblock Local convergence of the alternating least squares algorithm for
  canonical tensor approximation.
\newblock {\em SIAM Journal on Matrix Analysis and Applications},
  33(2):639--652, 2012.

\bibitem{vedenyapin2011}
Victor Vedenyapin, Alexander Sinitsyn, and Eugene Dulov.
\newblock {\em Kinetic {Boltzmann}, {Vlasov} and Related Equations}.
\newblock Elsevier, 2011.

\bibitem{wang2016}
Xingyu Wang, Roman Samulyak, Xiangmin Jiao, and Kwangmin Yu.
\newblock {AP-Cloud}: {Adaptive Particle-in-Cloud} method for optimal solutions
  to {Vlasov--Poisson} equation.
\newblock {\em Journal of Computational Physics}, 316:682--699, 2016.

\bibitem{xu2010scalable}
Jin Xu, Peter~N Ostroumov, Brahim Mustapha, and Jerry Nolen.
\newblock Scalable direct vlasov solver with discontinuous galerkin method on
  unstructured mesh.
\newblock {\em SIAM Journal on Scientific Computing}, 32(6):3476--3494, 2010.

\end{thebibliography}

\end{document}